\documentclass[preprint,12pt]{elsarticle}
\usepackage[T1]{fontenc}
\usepackage[utf8]{inputenc}
\usepackage{mathrsfs,amsmath,amssymb,amsthm,mathtools,bbm}
\usepackage{enumitem}
\usepackage{xcolor,graphicx}
\usepackage[a4paper,margin=1in]{geometry}
\definecolor{Bittersweet}{HTML}{C04F17}
\usepackage[unicode,bookmarksnumbered,colorlinks,plainpages,linktocpage]{hyperref}
\definecolor{Darkgreen}{rgb}{0,0.4,0}
\hypersetup{%
    pdfborder = {0 0 0},
    colorlinks,
    citecolor=blue,
    filecolor=Darkgreen,
    linkcolor=Bittersweet,
    urlcolor=blue}

\usepackage{comment}

\newtheorem{theorem}{Theorem}
\newtheorem{proposition}[theorem]{Proposition}
\newtheorem{corollary}[theorem]{Corollary}
\newtheorem{lemma}[theorem]{Lemma}

\theoremstyle{definition}
\newtheorem{definition}[theorem]{Definition}
\newtheorem{remark}[theorem]{Remark}
\newtheorem{example}[theorem]{Example}

\newcommand{\Sym}{\operatorname{Sym}}
\newcommand{\Hess}{\operatorname{Hess}}
\newcommand{\wt}{\operatorname{wt}}

\begin{document}
\hyphenpenalty=10000
\exhyphenpenalty=10000
\sloppy
%%%%%%%%%%%%%%%%%%%%%%%%%%%%%%%%%%%%%%%%%%%%%%%%%
%%%%%%%%%%%%%%%%%%%%%%%%%%%%%%%%%%%%%%%%%%%%%%%%%
\begin{frontmatter}
%%%%%%%%%%%%%%%%%%%%%%%%%%%%%%%%%%%%%%%%%%%%%%%%%
\title{On the Kronecker Products of Symmetric Persistent Tensors}
%%%%%%%%%%%%%%%%%%%%%%%%%%%%%%%%%%%%%%%%%%%%%%%%%
%\author{Masoud Gharahi\corref{cor1}}
\author{Masoud Gharahi}
\ead{masoud.gharahi@gmail.com; masoud.gharahighahi@uj.edu.pl}
\address{Faculty of Physics, Astronomy and Applied Computer Science, Jagiellonian University, 30-348 Kraków, Poland}
%\author[uj]{Masoud Gharahi\corref{cor1}}
%\ead{masoud.gharahighahi@uj.edu.pl}
%\cortext[cor1]{Corresponding author}
%\address[uj]{Faculty of Physics, Astronomy and Applied Computer Science, Jagiellonian University, 30-348 Kraków, Poland}
%%%%%%%%%%%%%%%%%%%%%%%%%%%%%%%%%%%%%%%%%%%%%%%%%
%%%%%%%%%%%%%%%%%%%%%%%%%%%%%%%%%%%%%%%%%%%%%%%%%
\begin{abstract}
Persistent tensors form a recursively defined class adapted to the substitution method and provide nontrivial lower bounds on tensor rank. We study the behavior of symmetric persistent tensors under Kronecker products. We establish a global differentiation identity expressing the Hessian matrix of a Kronecker product of homogeneous polynomials in terms of the Hessian matrices of its factors; although no corresponding determinant identity holds globally, the Hessian polynomials factor exactly at decomposable points. This yields a multiplicative formula for the distinguished Hessian coefficients of isobaric forms and closure under Kronecker products for symmetric persistent tensors that are isobaric of the distinguished weight, including iterated products and powers. Combined with the classification in small dimensions, this implies closure when both factors have dimension at most three, and for persistent cubics when both have dimension at most four. We further give sufficient closure criteria via simultaneous strict triangularizability of normalized Hessian spaces, for cubics and then arbitrary degree, and show that the distinguished isobaric class satisfies this condition. Finally, we show that symmetric persistence is not preserved in general by constructing a persistent cubic $f\in\operatorname{Sym}^3\mathbbm{C}^{12}$ such that $f\boxtimes f\in\operatorname{Sym}^3\mathbbm{C}^{144}$ is not persistent. The obstruction occurs at a nondecomposable point, showing that the Hessian factorization on the Segre variety does not extend to the full tensor product space.
\end{abstract}
%%%%%%%%%%%%%%%%%%%%%%%%%%%%%%%%%%%%%%%%%%%%%%%%%
%%%%%%%%%%%%%%%%%%%%%%%%%%%%%%%%%%%%%%%%%%%%%%%%%
\begin{keyword}
symmetric persistent tensors \sep Kronecker products \sep isobaric polynomials \sep normalized Hessian spaces \sep triangularizability
\MSC[2020] 15A69 \sep 15A24 \sep 15A72 \sep 81P40
\end{keyword}
%%%%%%%%%%%%%%%%%%%%%%%%%%%%%%%%%%%%%%%%%%%%%%%%%
\end{frontmatter}
%%%%%%%%%%%%%%%%%%%%%%%%%%%%%%%%%%%%%%%%%%%%%%%%%
\begin{center}
\emph{Dedicated to Giorgio Ottaviani}
\end{center}
%%%%%%%%%%%%%%%%%%%%%%%%%%%%%%%%%%%%%%%%%%%%%%%%%
%%%%%%%%%%%%%%%%%%%%%%%%%%%%%%%%%%%%%%%%%%%%%%%%%
\section{Introduction}

Persistent tensors were introduced in \cite{GL24,Gharahi} as a recursively defined class of tensors for which repeated substitution yields nontrivial lower bounds on tensor rank. For a bipartite tensor, persistence coincides with conciseness in the first factor. For tensors of order at least three, persistence requires first-factor conciseness together with persistence of all contractions outside a proper linear subspace.

It was conjectured in \cite{GL24,Gharahi} that the Kronecker product of persistent tensors is again persistent. Shitov constructed a counterexample to this unrestricted conjecture \cite{Shitov}. Since his example is nonsymmetric, it is natural to ask to what extent persistence is preserved under Kronecker products in the symmetric setting. This motivates the search for structured symmetric subclasses on which persistence is preserved under Kronecker products.

The symmetric setting is especially natural. Symmetric tensors of order $n$ on a vector space $V$ are naturally identified with homogeneous degree-$n$ polynomials in $\Sym^nV$. Recent work established a complete Hessian characterization of symmetric persistent tensors \cite{GO25}. More precisely, persistence is equivalent to a perfect-power condition on the determinants of the partially polarized Hessians \cite[Theorem~2]{GO25}. This characterization connects persistence with the classical Hessian construction from algebraic geometry \cite{Dolgachev}.

The starting point of the present paper is a global differentiation identity for Kronecker products of arbitrary homogeneous polynomials. More precisely, for arbitrary $f\in\Sym^n\mathbbm{C}^{d_1}$ and $g\in\Sym^n\mathbbm{C}^{d_2}$, we prove that the Hessian matrices satisfy
\begin{equation}\label{intro-global-Hessian}
\mathcal{H}_{f\boxtimes g}=\frac{1}{n(n-1)}\,\mathcal{H}_f\boxtimes\mathcal{H}_g,
\end{equation}
where the Kronecker product on the right-hand side is understood entrywise in the sense of the polynomial Kronecker product defined below. This is a global identity on the full tensor-product space and holds without any persistence assumption.

A crucial point is that, even for symmetric persistent tensors, taking determinants in \eqref{intro-global-Hessian} does not, in general, yield the corresponding identity for the Hessian polynomials. After evaluation at a decomposable point $v\otimes w$, however, the right-hand side becomes the ordinary matrix Kronecker product of the evaluated Hessian matrices, and the usual determinant formula applies. We obtain
\begin{equation}
\Hess(f\boxtimes g)(v\otimes w)=\left(\frac{1}{n(n-1)}\right)^{\!d_1d_2}\Hess(f)(v)^{d_2}\Hess(g)(w)^{d_1}.
\end{equation}
Thus the Hessian determinant admits an exact factorization on the Segre variety.

The identity at decomposable points is the main algebraic ingredient in our positive results. We first apply it to the distinguished isobaric weight spaces. For persistent forms in these spaces, the perfect-power Hessian certificate contains a distinguished coefficient which detects the relevant monomial structure. We prove that this coefficient is multiplicative under Kronecker products. Together with the preservation of isobaricity, this yields closure of the corresponding class of symmetric persistent tensors under Kronecker products. An explicit example is illustrated in Figure~\ref{fig:Kronecker-Persistent}. As a consequence, this class is also closed under iterated Kronecker products and Kronecker powers.
\begin{figure}[ht]
\centering
\includegraphics[width=\textwidth]{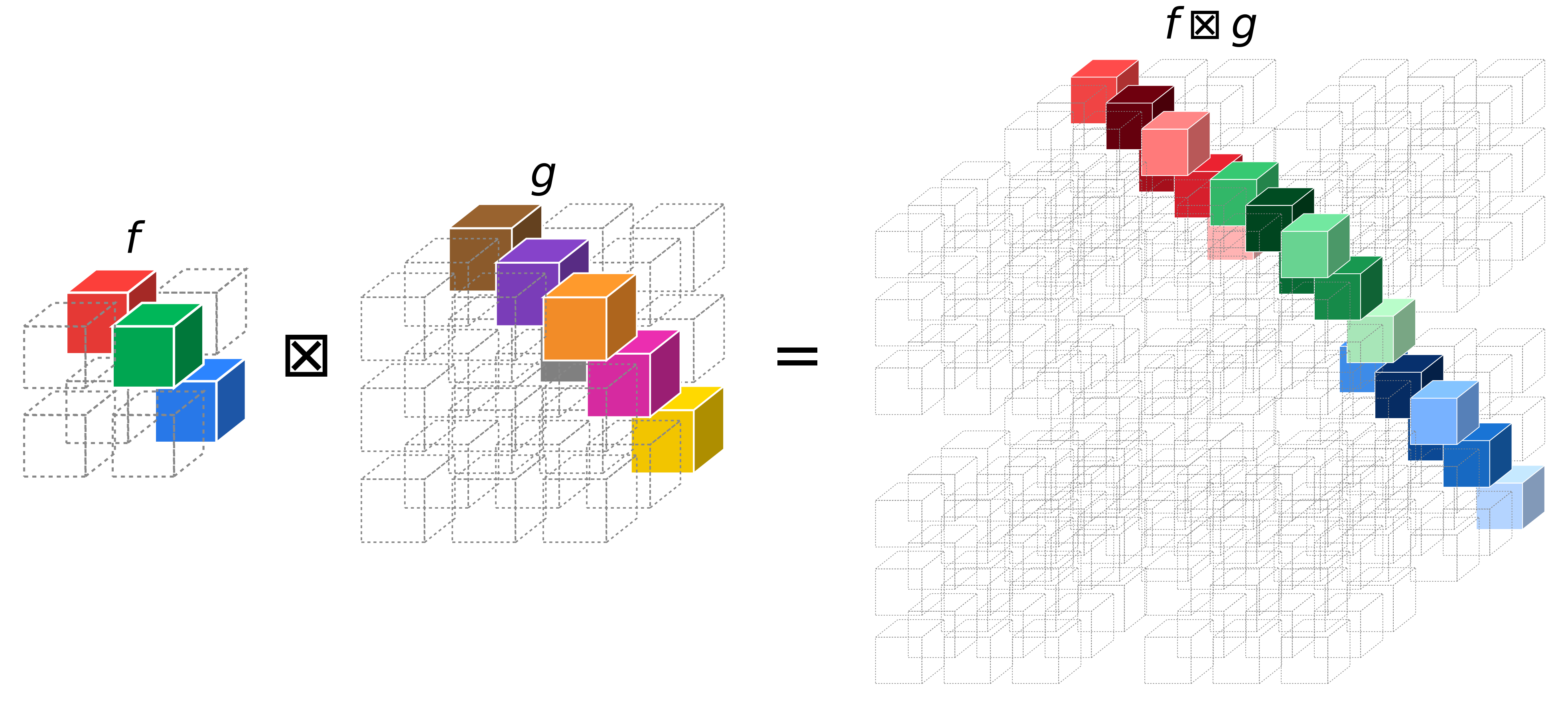}
\caption{Schematic representation of the Kronecker product of the symmetric persistent tensors $f=x_0^2x_1\in\Sym^3\mathbbm{C}^2$ and $g=y_0^2y_2+y_0y_1^2\in\Sym^3\mathbbm{C}^3$. The colored cubes represent nonzero entries, while the dotted cubes represent zero entries. The Kronecker product $f\boxtimes g\in\Sym^3(\mathbbm{C}^2\otimes\mathbbm{C}^3)\simeq\Sym^3\mathbbm{C}^6$ is represented as a $2\times2\times2$ block tensor, each block being a $3\times3\times3$ array. The three nonzero entries of $f$ produce three nonzero blocks, each reproducing the support pattern of $g$, as indicated by the red-, green-, and blue-toned colors. See Example~\ref{example-isobaric}.}
\label{fig:Kronecker-Persistent}
\end{figure}

The classification theorem of \cite{GO25} then gives closure beyond a fixed choice of isobaric coordinates. In particular, we obtain closure when both factors have dimension at most three, and closure for cubic persistent tensors when both factors have dimension at most four. We also give a complementary matrix-theoretic sufficient condition for Kronecker closure in terms of simultaneous strict triangularizability of normalized Hessian spaces, extend this criterion from cubic forms to arbitrary degree, and show that the distinguished isobaric class is contained in this triangularizable class.

The limitation of the decomposable-point argument is essential. Decomposable tensors form the Segre variety
\begin{equation}
\operatorname{Seg}\big(\mathbbm{P}(V)\times\mathbbm{P}(W)\big)\subseteq\mathbbm{P}(V\otimes W),
\end{equation}
which is a proper subvariety whenever $\dim V,\dim W>1$. Consequently, an identity verified on decomposable points need not extend to the whole tensor-product space: the difference of two candidate polynomial identities may be a nonzero element of the Segre ideal while vanishing identically on every decomposable tensor.

We show that this obstruction actually occurs for symmetric persistent tensors. We construct an explicit persistent cubic $f\in\Sym^3\mathbbm{C}^{12}$ for which $f\boxtimes f\in\Sym^3(\mathbbm{C}^{12}\otimes\mathbbm{C}^{12})\simeq\Sym^3\mathbbm{C}^{144}$ is not persistent. Motivated by the work of Meisters and Olech \cite{MO91}, the construction starts from a quadratic polynomial map $Q:\mathbbm{C}^5\to\mathbbm{C}^5$ whose Jacobian matrix is nilpotent at every point. From $Q$ we construct a cubic $f$ satisfying $\Hess(f)=(2s)^{12}$, so the cubic Hessian characterization of \cite{GO25} implies that $f$ is persistent. We then exhibit a nondecomposable tensor $z=u_0\otimes u_0+u_1\otimes u_1$ for which the normalized Hessian operator associated with $f\boxtimes f$ is not nilpotent. This contradicts the perfect-power Hessian condition that persistence of $f\boxtimes f$ would impose.

The paper is organized as follows. In Section~2 we recall partial polarization, symmetric persistence, the Hessian characterization, Jacobian and Hessian matrices, and isobaric polynomials. In Section~3 we introduce Kronecker products and establish the global differentiation identity and the Hessian-matrix identity. We then derive the Hessian factorization at decomposable points and prove preservation of isobaricity and multiplicativity of the distinguished Hessian coefficient, obtaining closure under Kronecker products, iterated products, and Kronecker powers in the distinguished isobaric class. Section~4 treats the classified low-dimensional cases. In Section~5 we develop a complementary matrix-space viewpoint based on normalized Hessian spaces and establish triangularizability criteria for Kronecker closure, first for cubic forms and then in arbitrary degree. Finally, in Section~6 we construct a counterexample showing that symmetric persistent tensors are not closed under Kronecker products in general.

%%%%%%%%%%%%%%%%%%%%%%%%%%%%%%%%%%%%%%%%%%%%%%%%%%%%%%%%%%%%
%%%%%%%%%%%%%%%%%%%%%%%%%%%%%%%%%%%%%%%%%%%%%%%%%%%%%%%%%%%%
\section{Preliminaries}

Throughout the paper, all vector spaces are finite-dimensional over $\mathbbm{C}$. Let $V$ be a vector space of dimension $d$. After choosing a basis $e_0,\ldots,e_{d-1}$ of $V$, a symmetric tensor $f\in\Sym^n V$ may equivalently be represented as a homogeneous polynomial of degree $n$ in the coordinate variables $x_0,\ldots,x_{d-1}$. We use the same symbol $f$ for the symmetric tensor and its associated homogeneous polynomial whenever no confusion can arise. After fixing the basis, we use the induced coordinate identification when writing contractions of symmetric tensors as directional derivatives of their associated polynomials.

Let $T_f$ denote the associated symmetric $n$-linear form. We use the restitution convention
\begin{equation}\label{restitution-convention}
f(v)=T_f(v,\ldots,v), \qquad v\in V.
\end{equation}
For $v=\sum_{i=0}^{d-1}v_i e_i$, recall that differentiation in the direction $v$ is $\partial_v=\sum_{i=0}^{d-1}v_i\frac{\partial}{\partial x_i}$. Hence, for arbitrary vectors $v^{(1)},\ldots,v^{(r)}\in V$ and $0\leq r\leq n$, the $r$-fold partial polarization of $f$ is
\begin{equation}\label{partial-polarization}
\partial_{v^{(1)}}\cdots\partial_{v^{(r)}}f(x)\!=
\!\!\!\sum_{i_1,\ldots,i_r=0}^{d-1}\!\!v^{(1)}_{i_1}\cdots v^{(r)}_{i_r}\frac{\partial^{\,r}f}{\partial x_{i_1}\cdots\partial x_{i_r}}(x)\!=\!\frac{n!}{(n-r)!}\,T_f(v^{(1)},\ldots,v^{(r)},x,\ldots,x),
\end{equation}
where the last $n-r$ arguments of $T_f$ are equal to $x$. For $r=n-2$, this partial polarization is a quadratic polynomial in $x$ whose coefficients depend multilinearly on $v^{(1)},\ldots,v^{(r)}$. For brevity, we denote it by
\begin{equation}\label{notation-partial-polarization}
f_{v^{(1)},\ldots,v^{(r)}}(x)
\coloneqq\partial_{v^{(1)}}\cdots\partial_{v^{(r)}}f(x).
\end{equation}

\begin{definition}[Hessian]
For a homogeneous polynomial $f\in\Sym^n V$, we denote by
\begin{equation}
\mathcal{H}_f(x)\coloneqq\left(\frac{\partial^{2} f}{\partial x_i\,\partial x_j}(x)\right)_{0\leq i,j\leq d-1}
\end{equation}
its \emph{Hessian matrix} at the point
$x=(x_0,\ldots,x_{d-1})$, and by
\begin{equation}
\Hess(f)\coloneqq\det\mathcal{H}_f(x)\in\mathbbm{C}[x_0,\ldots,x_{d-1}]
\end{equation}
its \emph{Hessian polynomial}. Henceforth, we shall refer to the polynomial $\Hess(f)$ simply as the \emph{Hessian} of $f$.
\end{definition}

For each $v\in V$, the Hessian matrix $\mathcal{H}_f(v)$ defines a symmetric bilinear form on $V$, also denoted by $\mathcal{H}_f(v)$, via
\begin{equation}
\mathcal{H}_f(v)(a,c)\coloneqq\sum_{i,j=0}^{d-1}a_i c_j
\frac{\partial^{2}f}{\partial x_i\,\partial x_j}(v).
\end{equation}
where $a=\sum_{i=0}^{d-1}a_i e_i$ and $c=\sum_{j=0}^{d-1}c_j e_j$. With the restitution convention \eqref{restitution-convention}, one has
\begin{equation}\label{hessian-polarization}
\mathcal{H}_f(v)(a,c)=n(n-1)T_f(a,c,v,\ldots,v).
\end{equation}

\begin{definition}[Jacobian]
Let $F=(f_0,\ldots,f_{d-1}):V\to V$ be a polynomial map. With respect to coordinates $x=(x_0,\ldots,x_{d-1})$ on $V$, its Jacobian matrix at $x$ is
\begin{equation}
\mathcal{J}_F(x)\coloneqq\left(\frac{\partial f_i}{\partial x_j}(x)\right)_{0\leq i,j\leq d-1}.
\end{equation}
\end{definition}

\begin{remark}
For a homogeneous polynomial $f\in\Sym^nV$, its gradient map is
\begin{equation}
\nabla f=\Big(\frac{\partial f}{\partial x_0},\ldots,\frac{\partial f}{\partial x_{d-1}}\Big).
\end{equation}
The Jacobian matrix of the gradient map is precisely the Hessian matrix of $f$:
\begin{equation}
\mathcal{J}_{\nabla f}(x)=\left(\frac{\partial^{2}f}{\partial x_i\partial x_j}(x)\right)_{0\leq i,j\leq d-1}=\mathcal{H}_f(x).
\end{equation}
\end{remark}

\begin{definition}
A symmetric tensor $f\in\Sym^nV$ is called \emph{concise} if its first partial derivatives are linearly independent. Equivalently, there is no proper subspace $W\subsetneq V$ such that $f\in\Sym^n W$. Equivalently, there is no invertible linear change of coordinates under which $f$ can be expressed
as a polynomial in fewer than $\dim V$ variables.
\end{definition}

\begin{definition}
A symmetric tensor $f\in\Sym^2V$ is called \emph{persistent} if the corresponding quadratic form is nonsingular. For $n\ge3$, a symmetric tensor $f\in\Sym^nV$ is called \emph{persistent} if it is concise and there exists a hyperplane $S\subsetneq V^\vee$ such that the contraction $\partial_u f\in\Sym^{n-1}V$ is persistent for every $u\in V^\vee\setminus S$.
\end{definition}

Here $\partial_u f$ denotes the contraction of the symmetric tensor $f$ with the covector $u\in V^\vee$.

We shall use the following Hessian characterization established in~\cite{GO25}.
\begin{theorem}[Hessian characterization {\cite[Theorem~2 and Corollary~4]{GO25}}]\label{thm:hessian}
Let $f\in\Sym^n\mathbbm{C}^d$, where $n\geq2$.
\begin{enumerate}[label=\textup{(\roman*)}]
\item If there exist a scalar $\lambda\in\mathbbm{C}^{\times}$ and a nonzero linear form $\ell$ such that
\begin{equation}
\Hess(f)=\lambda\,\ell^{d(n-2)},
\end{equation}
then $f$ is persistent.

\item If $n=3$, then the converse in \textnormal{(i)} also holds. More precisely, a cubic form $f\in\Sym^3\mathbbm{C}^d$ is persistent if and only if there exist $\lambda\in\mathbbm{C}^{\times}$ and a nonzero linear form $\ell$ such that
\begin{equation}
\Hess(f)=\lambda\,\ell^d.
\end{equation}

\item The tensor $f$ is persistent if and only if there exists a nonzero multihomogeneous polynomial $P_f(v^{(1)},\ldots,v^{(n-2)})$ of multidegree $(1,\ldots,1)$ such that
\begin{equation}\label{hessian-characterization}
\Hess\big(f_{v^{(1)},\ldots,v^{(n-2)}}(x)\big)=
\left[P_f\big(v^{(1)},\ldots,v^{(n-2)}\big)\right]^d,
\end{equation}
where the Hessian is taken with respect to $x=(x_0,\ldots,x_{d-1})$.

\item If $f$ is persistent, then there exists a nonzero homogeneous polynomial
$g\in\Sym^{n-2}\mathbbm{C}^d$ such that
\begin{equation}\label{hessian-perfect-power}
\Hess(f)=g^d.
\end{equation}
In particular, the Hessian of every persistent symmetric tensor is a perfect $d$-th power.
\end{enumerate}
\end{theorem}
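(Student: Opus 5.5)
Since this theorem is quoted from \cite{GO25}, here is how I would reconstruct it. The plan is to prove (iii) first, by induction on $n$, and then get (ii) and (iv) by restitution. Write
\begin{equation*}
D(v^{(1)},\ldots,v^{(n-2)})\coloneqq\Hess\big(f_{v^{(1)},\ldots,v^{(n-2)}}\big).
\end{equation*}
Since $f_{v^{(1)},\ldots,v^{(n-2)}}$ is a quadratic form in $x$, $D$ does not depend on $x$. By \eqref{partial-polarization}, $D$ is the determinant of the symmetric matrix $n!\,T_f(\cdot,\cdot,v^{(1)},\ldots,v^{(n-2)})$. Hence $D$ is multihomogeneous of multidegree $(d,\ldots,d)$, and it is symmetric in its $n-2$ arguments.

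For $n=2$ there are no $v$'s. Then $D$ is the constant $\det\mathcal{H}_f$, and persistence means this constant is nonzero, i.e.\ a nonzero $d$-th power.

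For the step $n\ge3$, note that $(\partial_u f)_{v^{(2)},\ldots}$ is proportional to $f_{u,v^{(2)},\ldots}$. So the determinant attached to $\partial_u f$ is, up to a nonzero constant, $D(u,\cdot)$.

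\emph{($\Leftarrow$)} Suppose $D=P^d$ with $P\neq0$. Let $S$ be a hyperplane containing $\{u: P(u,\cdot)\equiv0\}$, which is a proper subspace because $P$ is linear in $u$. For every $u\notin S$, the induction hypothesis shows that $\partial_u f$ is persistent. Conciseness also follows: if $f\in\Sym^nW$ with $W\subsetneq V$, every contraction matrix $T_f(\cdot,\cdot,v,\ldots)$ kills a common covector, which forces $D\equiv0$.

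\emph{($\Rightarrow$)} For $u\notin S$, induction gives $D(u,v^{(2)},\ldots)=P_u(v^{(2)},\ldots)^d$. In particular, for generic fixed values of the remaining arguments, $D$ is the $d$-th power of a linear form in $v^{(2)}$. By the symmetry of $D$ in its arguments, the same holds in every slot.

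The step I expect to be the main obstacle is the algebraic lemma that a multihomogeneous polynomial of multidegree $(d,\ldots,d)$ which is a $d$-th power of a linear form in each slot (for generic values of the others) is globally a $d$-th power $P^d$ of a multilinear $P$. I would argue by unique factorization in $\mathbbm{C}[v^{(1)},\ldots,v^{(n-2)}]$ as follows.
\begin{enumerate}
\item Write $D=\prod_i q_i^{m_i}$ with the $q_i$ distinct irreducibles.
\item Specializing all slots except $v^{(1)}$ to generic values turns $D$ into $c\,\ell^d$. By Gauss's lemma, each $q_i$ therefore has degree at most $1$ in $v^{(1)}$, and all $q_i$ that do involve $v^{(1)}$ become proportional under the specialization.
\item Distinct irreducibles can become proportional only if their ratio is a unit in the remaining variables. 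This leaves a single irreducible factor $q$ involving $v^{(1)}$, with $\deg_{v^{(1)}}q=1$ and multiplicity $d$.
\item Repeating this slot by slot, and using that $D$ has degree exactly $d$ in each slot, shows that $D=\lambda q^d$ with $q$ multilinear. Absorbing $\lambda^{1/d}$ into $q$ gives $P$.
\end{enumerate}
Nonvanishing of $D$ comes from persistence of some $\partial_u f$, so $P\neq0$.

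For (iv), restitute $v^{(i)}=x$. By \eqref{hessian-polarization}, $\mathcal{H}_f(x)$ and the contraction matrix at $(x,\ldots,x)$ are both scalar multiples of $T_f(\cdot,\cdot,x,\ldots,x)$. Hence
\begin{equation*}
\Hess(f)(x)=c\,D(x,\ldots,x)=c\,P(x,\ldots,x)^d,
\end{equation*}
and we take $g=c^{1/d}P(x,\ldots,x)$. In particular, for $n=3$ we get $D(v)=c\,\Hess(f)(v)$ with $c\neq0$, so (iii) reads exactly as (ii).

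For (i) with general $n$, knowing only the diagonal of $D$ does not recover $D$, so I would argue directly by induction on the definition of persistence instead of through (iii). Suppose $\Hess(f)=\lambda\ell^{d(n-2)}$. I would take $S=\ker\ell$ and try to show that, for $u\notin S$, the restriction of the contraction pencil along the line $x=u+t w$ forces $\Hess(\partial_u f)$ to be a power of a linear form. Checking that this really descends through the induction is the part I am least sure of, and it is where I would consult the structure of the argument in \cite{GO25}.
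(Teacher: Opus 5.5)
The paper quotes this theorem from GO25 without proof, so your argument has to stand on its own. It has a gap at the very bottom of the induction.

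\textbf{The base case $n=3$ of ``only if'' in (iii).} Your step extracts a $d$-th-power structure from the slots $v^{(2)},\dots,v^{(n-2)}$ and moves it to the $u$-slot by symmetry. At $n=3$ there is only the $u$-slot. The case $n=2$ tells you merely that $D(u)=\det\mathcal{H}_f(u)\neq0$ for $u\notin S$. The hypothesis of your UFD lemma is then exactly the conclusion you want.

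Every case $n\ge4$ calls the case $n-1$, and (ii) and (iv) are read off from (iii). So the whole ``only if'' half rests on this missing step.

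It cannot be supplied by genericity of $u$, which is all your argument uses at this level. For $f=x_0^3+x_1^3$, the contraction $\partial_uf$ is a nonsingular quadric whenever $u_0u_1\neq0$. Yet $\Hess(f)=36x_0x_1$ is not of the form $\lambda\ell^2$, and indeed $f$ is not persistent: its exceptional set is two lines.

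What you need is that the exceptional set lies in a \emph{hyperplane} $S=\{\ell=0\}$. Then the zero locus of the nonzero degree-$d$ form $D$ is contained in $S$. Each irreducible factor of $D$ therefore defines an irreducible hypersurface inside $S$, hence equal to $S$. The Nullstellensatz then gives $D=c\,\ell^d$ with $c\neq0$. With this base case, your slot-by-slot induction and factorization lemma do give the ``only if'' direction of (iii) for all $n$.

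\textbf{Part (i) for $n\ge4$.} This is still open in your proposal. The route you suggest cannot succeed using only determinants of $\mathcal{H}_f$ along lines, since these see only the diagonal of $D$. For instance, take
\[
\Phi(x)=\begin{pmatrix}x_0^2+x_0x_1+\tfrac12x_1^2&\tfrac12x_1^2\\ \tfrac12x_1^2&x_0^2-x_0x_1+\tfrac12x_1^2\end{pmatrix}.
\]
This is a symmetric matrix of binary quadratic forms with $\det\Phi=x_0^4$. Let $\Phi(u,x)$ be its polarization, the symmetric bilinear map with $\Phi(x,x)=\Phi(x)$. Then $\Phi(e_0,x)=\operatorname{diag}(x_0+\tfrac12x_1,\,x_0-\tfrac12x_1)$, whose determinant $x_0^2-\tfrac14x_1^2$ is not a square of a linear form. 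Of course $\Phi$ is not a Hessian, since $\partial_{x_1}\Phi_{00}\neq\partial_{x_0}\Phi_{01}$. So any proof of (i) must use the full symmetry of $T_f$.

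It also helps to know the precise target. Suppose $f$ is persistent with $\Hess(f)=\lambda\ell^{d(n-2)}$. Then $P$ is symmetric, since an alternating $P$ would force $\Hess(f)\equiv0$. Also $P(x,\dots,x)$ is proportional to $\ell(x)^{n-2}$. So necessarily
\[
D(v^{(1)},\dots,v^{(n-2)})=c\prod_i\ell(v^{(i)})^d,
\]
and this identity is what must be derived from the diagonal one.

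\textbf{Part (iv).} You also owe $g\not\equiv0$. Symmetry of $D=P^d$ only forces $P$ to be symmetric or alternating. An alternating $P\neq0$ vanishes on the diagonal, so $P\neq0$ alone does not give $P(x,\dots,x)\not\equiv0$; you must exclude this case.
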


We also recall the notion of an isobaric polynomial.
\begin{definition}
Let $x_0,\ldots,x_{d-1}$ be coordinates on $\mathbbm{C}^d$, and assign the weight
\begin{equation}
\wt(x_j)\coloneqq j,\qquad j=0,\ldots,d-1.
\end{equation}
The weight of a monomial $\mathbf{x}^{\alpha}=x_0^{\alpha_0}\cdots x_{d-1}^{\alpha_{d-1}}$
is $\wt(\mathbf{x}^{\alpha})=\sum_{j=0}^{d-1}j\alpha_j$. A homogeneous polynomial is called \emph{isobaric of weight $\delta$} if every monomial in its support has weight $\delta$.
\end{definition}

The following result is proved in~\cite{GO25}.
\begin{lemma}[{\cite[Lemma~15]{GO25}}]\label{lem:Hessian}
Let $f\in\Sym^n\mathbbm{C}^d$ be isobaric of weight $d-1$. Then
\begin{equation}
\Hess(f)=\lambda\,x_0^{d(n-2)}
\end{equation}
for some uniquely determined scalar $\lambda\in\mathbbm{C}$.
\end{lemma}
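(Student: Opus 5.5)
The plan is to show that $\Hess(f)$ is itself isobaric, compute its weight and degree, and observe that only one monomial of degree $d(n-2)$ can have that weight.

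\emph{Weight of the entries.} Since $f$ is isobaric of weight $d-1$, differentiation with respect to $x_i$ lowers the weight of every monomial by $i$. Hence $\partial^2 f/\partial x_i\partial x_j$ is either zero or isobaric of weight $d-1-i-j$. This quantity is negative for large $i+j$; in that case the entry must vanish, because monomials have nonnegative weight.

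\emph{Weight of the determinant.} Expand $\det\mathcal{H}_f$ by the Leibniz formula. For a permutation $\sigma$ of $\{0,\ldots,d-1\}$, the product $\prod_i \partial^2 f/\partial x_i\partial x_{\sigma(i)}$ is either zero or isobaric of weight
\[
\sum_{i=0}^{d-1}(d-1-i-\sigma(i))=d(d-1)-2\cdot\tfrac{d(d-1)}{2}=0 .
\]
So every nonzero term has weight $0$, and therefore $\Hess(f)$ is isobaric of weight $0$. Each entry is homogeneous of degree $n-2$, so $\Hess(f)$ is homogeneous of degree $d(n-2)$ (or is zero).

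\emph{Conclusion.} A monomial $\mathbf{x}^\alpha$ has weight $\sum_j j\alpha_j=0$ exactly when $\alpha_j=0$ for all $j\ge1$, i.e.\ when it is a power of $x_0$. The only monomial of degree $d(n-2)$ and weight $0$ is therefore $x_0^{d(n-2)}$. Hence $\Hess(f)=\lambda x_0^{d(n-2)}$, where $\lambda$ is the coefficient of that monomial. Uniqueness of $\lambda$ is immediate because $x_0^{d(n-2)}\neq0$, and the case $\lambda=0$ covers $\Hess(f)=0$.

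The only step needing care is the weight bookkeeping for entries whose formal weight is negative; these are identically zero, so they contribute nothing. There is no real obstacle in this argument.
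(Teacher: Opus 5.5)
Your weight-counting argument is correct and complete: each entry $\partial^2 f/\partial x_i\partial x_j$ is zero or isobaric of weight $d-1-i-j$, so every Leibniz term has total weight $\sum_i\big(d-1-i-\sigma(i)\big)=0$, and the only monomial of degree $d(n-2)$ and weight $0$ is $x_0^{d(n-2)}$. The paper gives no proof of this lemma (it quotes \cite[Lemma~15]{GO25}), but your argument is the standard one and uses the same support bookkeeping the paper itself employs in \eqref{isobaric-pol-Hessian-support}.
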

In particular, if $\lambda\neq 0$, then $f$ is persistent.

\paragraph{Notation}
Throughout the paper, the symbol $\otimes$ denotes the tensor product, whereas $\boxtimes$ denotes the Kronecker product. In particular, if $A$ and $B$ are bilinear forms or endomorphisms, then the matrix representing $A\boxtimes B$ with respect to the corresponding product bases is the usual matrix Kronecker product.

%%%%%%%%%%%%%%%%%%%%%%%%%%%%%%%%%%%%%%%%%%%%%%%%%%%%%%%%%%%%
%%%%%%%%%%%%%%%%%%%%%%%%%%%%%%%%%%%%%%%%%%%%%%%%%%%%%%%%%%%%
\section{Kronecker products}

Let $n\geq2$. Let $V\simeq\mathbbm{C}^{d_1}$ and $W\simeq\mathbbm{C}^{d_2}$, and let $f\in\Sym^n V$ and $g\in\Sym^n W$ be symmetric tensors. By the restitution convention \eqref{restitution-convention},
\begin{equation}\label{restitution-convention-f-g}
f(v)=T_f(v,\ldots,v), \qquad g(w)=T_g(w,\ldots,w).
\end{equation}

We now define the Kronecker product in terms of the associated symmetric multilinear forms.

\begin{definition}\label{def:KroneckerProduct}
Let $T_f$ and $T_g$ denote the symmetric $n$-linear forms associated with $f\in\Sym^n V$ and $g\in\Sym^n W$, respectively. The \emph{Kronecker product} $f\boxtimes g\in\Sym^n(V\otimes W)$ is the symmetric tensor whose associated $n$-linear form is the composition
\begin{equation}
(V\otimes W)^{\otimes n} \longrightarrow V^{\otimes n}\otimes W^{\otimes n} \xrightarrow{\,\,T_f\otimes T_g\,\,}\mathbbm{C},
\end{equation}
where the first arrow is the canonical shuffle isomorphism $(v_1\otimes w_1)\otimes\cdots\otimes(v_n\otimes w_n) \longmapsto (v_1\otimes\cdots\otimes v_n)\otimes(w_1\otimes\cdots\otimes w_n)$. Equivalently,
\begin{equation}\label{Kronecker-definition}
T_{f\boxtimes g}(v_1\otimes w_1,\ldots,v_n\otimes w_n)=T_f(v_1,\ldots,v_n)\,T_g(w_1,\ldots,w_n)
\end{equation}
for all $v_i\in V$ and $w_i\in W$.
\end{definition}
The canonical shuffle construction shows that $T_{f\boxtimes g}$ is well defined, and its symmetry follows immediately from the symmetry of $T_f$ and $T_g$.

Choose coordinates $x_0,\ldots,x_{d_1-1}$ and $y_0,\ldots,y_{d_2-1}$ on $V$ and $W$, respectively. Let $z_{ij}$ denote the coordinate corresponding to the basis vector $e_i\otimes\varepsilon_j$ of $V\otimes W$.

\begin{remark}\label{rem:coordinate-Kronecker}
With respect to the coordinates introduced above, Definition~\ref{def:KroneckerProduct} is equivalent to the explicit formula
\begin{equation}\label{coordinate-Kronecker}
f\boxtimes g=\frac{1}{(n!)^2}
\sum_{i_1,\ldots,i_n=0}^{d_1-1}
\sum_{j_1,\ldots,j_n=0}^{d_2-1}
\frac{\partial^{\,n}f}{\partial x_{i_1}\cdots\partial x_{i_n}}
\frac{\partial^{\,n}g}{\partial y_{j_1}\cdots\partial y_{j_n}}
z_{i_1j_1}\cdots z_{i_nj_n}.
\end{equation}
\end{remark}

\begin{lemma}[Kronecker products preserve isobaricity]
\label{lem:Kronecker-isobaric}
Assume that $f$ is isobaric of weight $d_1-1$ and that $g$ is
isobaric of weight $d_2-1$. Order the basis of $V\otimes W$
lexicographically and assign $\wt(z_{ij})=d_2i+j$. Then $f\boxtimes g$ is isobaric of weight $d_1d_2-1$.
\end{lemma}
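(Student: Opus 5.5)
We will work directly with the coordinate formula \eqref{coordinate-Kronecker} of Remark~\ref{rem:coordinate-Kronecker}. Every monomial of $f\boxtimes g$ has the form $z_{i_1j_1}\cdots z_{i_nj_n}$, and its coefficient is a sum of products
\begin{equation*}
\frac{\partial^{\,n}f}{\partial x_{i_1}\cdots\partial x_{i_n}}\,\frac{\partial^{\,n}g}{\partial y_{j_1}\cdots\partial y_{j_n}},
\end{equation*}
taken over the index tuples that give the same monomial. A product of this form is nonzero only if both factors are nonzero. So it suffices to show that each tuple $(i_1,j_1),\ldots,(i_n,j_n)$ with both derivatives nonzero gives a monomial of weight $d_1d_2-1$. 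Cancellation between different tuples can only remove monomials, and removing monomials cannot destroy isobaricity. The zero polynomial is isobaric by convention, although $f\boxtimes g\neq0$ anyway when $f,g\neq0$.

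The first step identifies when the derivatives are nonzero. Since $f$ is homogeneous of degree $n$, the $n$-th derivative $\partial^n f/\partial x_{i_1}\cdots\partial x_{i_n}$ is a constant. Up to a nonzero multinomial factor, this constant equals the coefficient of the monomial $x_{i_1}\cdots x_{i_n}$ in $f$. It is therefore nonzero only if $x_{i_1}\cdots x_{i_n}$ lies in the support of $f$. By the isobaricity hypothesis, this gives $i_1+\cdots+i_n=d_1-1$. In the same way, a nonzero derivative of $g$ forces $j_1+\cdots+j_n=d_2-1$.

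The second step computes the weight. With $\wt(z_{ij})=d_2i+j$, the monomial $z_{i_1j_1}\cdots z_{i_nj_n}$ has weight
\begin{equation*}
\sum_{k=1}^n (d_2 i_k+j_k)=d_2(d_1-1)+(d_2-1)=d_1d_2-1.
\end{equation*}
It remains to check that this weight assignment matches the lexicographic relabelling $z_{ij}\leftrightarrow$ coordinate number $d_2i+j\in\{0,\ldots,d_1d_2-1\}$. This identification is a bijection, so the weight agrees with the standard weight on $\mathbbm{C}^{d_1d_2}$, and we conclude that $f\boxtimes g$ is isobaric of weight $d_1d_2-1$. The argument is routine, and the only point requiring care is the justification above that cancellation cannot introduce a monomial of the wrong weight.
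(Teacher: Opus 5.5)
Your proof is correct and follows essentially the same route as the paper. You expand $f\boxtimes g$ in the coordinates $z_{ij}$ and note that a nonzero term requires both factor coefficients to be nonzero, so isobaricity forces $\sum_r i_r=d_1-1$ and $\sum_r j_r=d_2-1$; you then compute the weight $d_2(d_1-1)+(d_2-1)=d_1d_2-1$ and observe that collecting equal monomials cannot create a monomial of another weight. The only difference is cosmetic: you use the $n$-th partial derivatives from \eqref{coordinate-Kronecker}, while the paper uses the coordinate arrays of $T_f$ and $T_g$, and these differ only by the nonzero factor $n!$.
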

\begin{proof}
With respect to the chosen bases and the restitution convention \eqref{restitution-convention}, let $f_{i_1\cdots i_n}$ and $g_{j_1\cdots j_n}$ denote the symmetric coordinate arrays of $T_f$ and $T_g$. Accordingly,
\begin{equation*}
f=\sum_{i_1,\ldots,i_n}f_{i_1\cdots i_n}\,x_{i_1}\cdots x_{i_n},
\qquad\qquad
g=\sum_{j_1,\ldots,j_n}g_{j_1\cdots j_n}\,y_{j_1}\cdots y_{j_n}.
\end{equation*}
By the definition of the Kronecker product,
\begin{equation*}
f\boxtimes g=\sum_{\substack{i_1,\ldots,i_n\\ j_1,\ldots,j_n}}
f_{i_1\cdots i_n}\,g_{j_1\cdots j_n}\,z_{i_1j_1}\cdots z_{i_nj_n}.
\end{equation*}
Every summand with nonzero coefficient in the displayed tensor-array expansion has the form $f_{i_1\cdots i_n}g_{j_1\cdots j_n}z_{i_1j_1}\cdots z_{i_nj_n}$. Since $f_{i_1\cdots i_n}g_{j_1\cdots j_n}\neq0$ over $\mathbbm{C}$, both $f_{i_1\cdots i_n}$ and $g_{j_1\cdots j_n}$ are nonzero. Hence, by the isobaricity of $f$ and $g$,
\begin{equation*}
\sum_{r=1}^n i_r=d_1-1,
\qquad\qquad
\sum_{r=1}^n j_r=d_2-1.
\end{equation*}
Therefore,
\begin{equation*}
\wt(z_{i_1j_1}\cdots z_{i_nj_n})=\sum_{r=1}^n\wt(z_{i_rj_r})= d_2\sum_{r=1}^n i_r+\sum_{r=1}^n j_r=d_2(d_1-1)+(d_2-1)=d_1d_2-1.
\end{equation*}
Hence every summand in the tensor-array expansion has weight $d_1d_2-1$. After collecting equal monomials, every monomial with nonzero coefficient still has this weight. Therefore $f\boxtimes g$ is isobaric of weight $d_1d_2-1$.
\end{proof}

We shall use the following basic determination principle for Segre--Veronese tensors, which follows from the standard polarization identity and the resulting fact that pure powers span the corresponding symmetric powers; see, for instance, \cite{IK,Landsberg}.

\begin{lemma}[Segre--Veronese determination]
\label{lem:Segre-Veronese-determination}
Let $V$ and $W$ be finite-dimensional complex vector spaces, and let $A,B\in\bigotimes_{i=1}^{r} \big(\Sym^{n_i}V^\vee\otimes\Sym^{m_i}W^\vee\big)$. If
\begin{equation*}
A\big((v^{(1)})^{n_1}\otimes(w^{(1)})^{m_1},
\ldots,(v^{(r)})^{n_r}\otimes(w^{(r)})^{m_r}\big)\!=
\!B\big((v^{(1)})^{n_1}\otimes(w^{(1)})^{m_1},
\ldots,(v^{(r)})^{n_r}\otimes(w^{(r)})^{m_r}\big)
\end{equation*}
for every $v^{(i)}\in V$ and $w^{(i)}\in W$, $1\leq i\leq r$, then $A=B$.
\end{lemma}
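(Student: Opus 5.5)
We reduce the statement to the vanishing of a single multilinear object and then use the classical fact that pure powers span symmetric powers, one factor at a time. Put $C\coloneqq A-B\in\bigotimes_{i=1}^{r}\big(\Sym^{n_i}V^\vee\otimes\Sym^{m_i}W^\vee\big)$. By linearity of evaluation, the hypothesis becomes
\[
C\big((v^{(1)})^{n_1}\otimes(w^{(1)})^{m_1},\ldots,(v^{(r)})^{n_r}\otimes(w^{(r)})^{m_r}\big)=0
\]
for all $v^{(i)}\in V$ and $w^{(i)}\in W$. We view $C$ as a multilinear form on $\prod_{i=1}^r\big(\Sym^{n_i}V\otimes\Sym^{m_i}W\big)$, using the canonical duality $\Sym^{k}V^\vee\simeq(\Sym^kV)^\vee$ in characteristic zero. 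The goal is then to show that $C$ vanishes on a spanning set of each factor $\Sym^{n_i}V\otimes\Sym^{m_i}W$. Multilinearity will then force $C=0$.

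The key step is the spanning statement: the pure powers $v^{n}$, $v\in V$, span $\Sym^nV$. This follows from the polarization identity
\[
v_1\cdots v_n=\frac{1}{2^n n!}\sum_{\epsilon\in\{\pm1\}^n}\epsilon_1\cdots\epsilon_n\,(\epsilon_1v_1+\cdots+\epsilon_nv_n)^n,
\]
which writes every symmetric product of vectors as a linear combination of $n$-th powers. The symmetric products $v_1\cdots v_n$ span $\Sym^nV$, so the powers do too. The same argument applies to $\Sym^mW$. Since simple tensors span a tensor product, the elements $v^{n}\otimes w^{m}$ span $\Sym^nV\otimes\Sym^mW$. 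The degenerate cases $n_i=0$ or $m_i=0$ are trivial, because $\Sym^0=\mathbbm{C}$ is spanned by $1=v^0$.

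With spanning in hand, we conclude by induction on the slot index. Fix arbitrary pure-power arguments in slots $2,\ldots,r$. Then $C$ restricted to slot $1$ is a linear functional on $\Sym^{n_1}V\otimes\Sym^{m_1}W$ that vanishes on the spanning set $\{v^{n_1}\otimes w^{m_1}\}$, so it vanishes identically. Therefore $C(\xi_1,\text{pure},\ldots,\text{pure})=0$ for every $\xi_1$ in the first factor. We then free slot $2$ in the same way, and continue through slot $r$. At the end, $C$ vanishes on all of $\prod_i\big(\Sym^{n_i}V\otimes\Sym^{m_i}W\big)$, so $C=0$ and hence $A=B$.

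The proof is routine overall. The one point needing care is the identification of $\Sym^{k}V^\vee$ with $(\Sym^kV)^\vee$, that is, with symmetric multilinear forms, so that "evaluating on $v^{n_i}$" has an unambiguous meaning. This identification uses characteristic zero, which holds here since we work over $\mathbbm{C}$, and it is also where the constant $2^n n!$ in the polarization identity is invertible.
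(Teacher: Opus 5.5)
Your proof is correct and follows essentially the same route as the paper: both rest on the fact that pure powers span $\Sym^{n_i}V$ and $\Sym^{m_i}W$, so the elements $(v^{(i)})^{n_i}\otimes(w^{(i)})^{m_i}$ span each factor $\Sym^{n_i}V\otimes\Sym^{m_i}W$, and therefore $A-B$ must vanish. The differences are cosmetic. You prove the spanning explicitly via the signed polarization identity, where the paper cites linear nondegeneracy of the Veronese variety, and you conclude by freeing one slot at a time rather than by spanning the full tensor product $\bigotimes_{i}(\Sym^{n_i}V\otimes\Sym^{m_i}W)$.
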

\begin{proof}
For each $1\leq i\leq r$, the pure powers $(v^{(i)})^{n_i}$, with $v^{(i)}\in V$, and $(w^{(i)})^{m_i}$, with $w^{(i)}\in W$, span $\Sym^{n_i}V$ and $\Sym^{m_i}W$, respectively. Indeed, this follows from the fact that the Veronese variety is linearly nondegenerate, and therefore its affine cone spans the corresponding symmetric power. Consequently, the Segre--Veronese variety is also linearly nondegenerate, and hence
\begin{equation*}
\operatorname{Span}
\left\{(v^{(i)})^{n_i}\otimes(w^{(i)})^{m_i}:v^{(i)}\in V,\ w^{(i)}\in W\right\}=\Sym^{n_i}V\otimes\Sym^{m_i}W.
\end{equation*}
Taking tensor products over $i=1,\ldots,r$, we obtain
\begin{equation*}
\operatorname{Span}\Big\{\bigotimes_{i=1}^{r} \big((v^{(i)})^{n_i}\otimes(w^{(i)})^{m_i}\big):v^{(i)}\in V,\ w^{(i)}\in W\Big\}=\bigotimes_{i=1}^{r} \big(\Sym^{n_i}V\otimes\Sym^{m_i}W\big).
\end{equation*}
Since $A-B$ is a linear functional on this tensor product and vanishes on a spanning set, it vanishes identically. Hence $A=B$.
\end{proof}

The Kronecker product is compatible with differentiation in the sense of global polynomial identities. The following proposition expresses each partial derivative of a Kronecker product as the Kronecker product of the corresponding partial derivatives of its factors.

\begin{proposition}[Differentiation of Kronecker products]\label{prop:Kronecker-differentiation}
Let $f\in\Sym^nV$ and $g\in\Sym^nW$. Then, for every $0\leq i\leq d_1-1$ and $0\leq j\leq d_2-1$,
\begin{equation}\label{first-derivative-Kronecker}
\frac{\partial(f\boxtimes g)}{\partial{z_{ij}}}=\frac{1}{n} \left(\frac{\partial f}{\partial{x_i}}\boxtimes\frac{\partial g}{\partial{y_j}}\right).
\end{equation}
More generally, for $1\leq r\leq n$,
\begin{equation}\label{iterated-derivative-Kronecker}
\frac{\partial^{\,r}(f\boxtimes g)}{\partial z_{i_1j_1}\cdots\partial z_{i_rj_r}}=\frac{(n-r)!}{n!} \left(\frac{\partial^{\,r}f}{\partial x_{i_1}\cdots\partial x_{i_r}}\boxtimes\frac{\partial^{\,r}g}{\partial y_{j_1}\cdots\partial y_{j_r}}\right).
\end{equation}
\end{proposition}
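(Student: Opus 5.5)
The plan is to work entirely at the level of the associated symmetric multilinear forms. Here $\partial f/\partial x_i\in\Sym^{n-1}V$ and $\partial g/\partial y_j\in\Sym^{n-1}W$, so the right-hand side of \eqref{first-derivative-Kronecker} is a Kronecker product of degree $n-1$ forms in the sense of Definition~\ref{def:KroneckerProduct}. The first step is to identify the multilinear form of a first derivative. Applying \eqref{partial-polarization} with $r=1$ and $v^{(1)}=e_i$ gives $\frac{\partial f}{\partial x_i}(x)=n\,T_f(e_i,x,\ldots,x)$. The right-hand side is the restitution of the symmetric $(n-1)$-linear form $(a_1,\ldots,a_{n-1})\mapsto n\,T_f(e_i,a_1,\ldots,a_{n-1})$. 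Since a symmetric form is determined by its restitution (polarization), we get
\begin{equation*}
T_{\partial f/\partial x_i}(a_1,\ldots,a_{n-1})=n\,T_f(e_i,a_1,\ldots,a_{n-1}).
\end{equation*}
The same argument gives the analogous identities for $g$ with $\varepsilon_j$, and for $f\boxtimes g$ with the basis vector $e_i\otimes\varepsilon_j$ of $V\otimes W$.

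Next I compare the two sides of \eqref{first-derivative-Kronecker} on decomposable arguments $v_1\otimes w_1,\ldots,v_{n-1}\otimes w_{n-1}$. For the left-hand side, the identity above followed by \eqref{Kronecker-definition} gives
\begin{equation*}
T_{\partial(f\boxtimes g)/\partial z_{ij}}(v_1\otimes w_1,\ldots)=n\,T_f(e_i,v_1,\ldots)\,T_g(\varepsilon_j,w_1,\ldots).
\end{equation*}
For the right-hand side, \eqref{Kronecker-definition} in degree $n-1$ and then the identity above for each factor give
\begin{equation*}
\tfrac1n\,T_{\partial f/\partial x_i}(v_1,\ldots)\,T_{\partial g/\partial y_j}(w_1,\ldots)=\tfrac1n\cdot n\cdot n\,T_f(e_i,v_1,\ldots)\,T_g(\varepsilon_j,w_1,\ldots).
\end{equation*}
The two expressions agree. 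Since decomposable tensors span $V\otimes W$, two multilinear forms that agree on all tuples of decomposable vectors coincide. This is also Lemma~\ref{lem:Segre-Veronese-determination} with all $n_i=m_i=1$. Restituting then yields \eqref{first-derivative-Kronecker}.

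For \eqref{iterated-derivative-Kronecker} I induct on $r$, applying \eqref{first-derivative-Kronecker} at each step with $n$ replaced by the current degree. Differentiating the degree $n-k$ identity once more in $z_{i_{k+1}j_{k+1}}$ introduces a factor $\frac{1}{n-k}$. The derivatives of the factors are again ordinary partial derivatives of $f$ and $g$, which commute. The accumulated constant is therefore $\prod_{k=0}^{r-1}\frac{1}{n-k}=\frac{(n-r)!}{n!}$, as claimed. Alternatively, one can do the computation in one stroke using $T_{\partial^r f}=\frac{n!}{(n-r)!}T_f(e_{i_1},\ldots,e_{i_r},\cdot)$ from \eqref{partial-polarization}.

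The only delicate points are the constants and the edge cases of low degree. The Kronecker product must be used in degree $n-r$, which may be below $2$ even though Definition~\ref{def:KroneckerProduct} was phrased for $n\ge2$. In degree $1$ it is the ordinary tensor product of linear forms, and in degree $0$ (the case $r=n$) it is simply the product of constants. The argument above never uses $n\ge2$, so it covers these cases verbatim. I expect the main obstacle to be keeping the normalization $T_{\partial f}=nT_f(e_i,\cdot)$ consistent with the restitution convention \eqref{restitution-convention}.
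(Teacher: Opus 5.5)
Your proposal is correct and is essentially the paper's proof: you identify the $(n-1)$-linear form of $\partial f/\partial x_i$ as $n\,T_f(e_i,\cdot)$, check via Definition~\ref{def:KroneckerProduct} that both sides agree on tuples of decomposable vectors, conclude by Lemma~\ref{lem:Segre-Veronese-determination} with $n_k=m_k=1$, and iterate to accumulate the constant $\frac{(n-r)!}{n!}$. Your observation that the iteration uses the Kronecker product in degrees $n-r<2$, down to products of constants when $r=n$, is a small point the paper leaves implicit.
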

\begin{proof}
It is enough to prove \eqref{first-derivative-Kronecker}, since \eqref{iterated-derivative-Kronecker} then follows by iteration. The symmetric $(n-1)$-linear form associated with the $\frac{\partial(f\boxtimes g)}{\partial{z_{ij}}}$ sends $U_1,\ldots,U_{n-1}\in V\otimes W$ to
\begin{equation*}
n\,T_{f\boxtimes g}(e_i\otimes\varepsilon_j,U_1,\ldots,U_{n-1}).
\end{equation*}
Evaluating this form on decomposable vectors $U_k=v_k\otimes w_k$ and using Definition~\ref{def:KroneckerProduct}, we obtain
\begin{equation*}
n\,T_{f\boxtimes g}(e_i\otimes\varepsilon_j,v_1\otimes w_1,\ldots,v_{n-1}\otimes w_{n-1})=n\,T_f(e_i,v_1,\ldots,v_{n-1})
T_g(\varepsilon_j,w_1,\ldots,w_{n-1}).
\end{equation*}
The symmetric $(n-1)$-linear forms associated with $\frac{\partial f}{\partial{x_i}}$ and $\frac{\partial g}{\partial{y_j}}$ send $(v_1,\ldots,v_{n-1})$ and $(w_1,\ldots,w_{n-1})$ to 
\begin{equation*}
n\,T_f(e_i,v_1,\ldots,v_{n-1}) \qquad\text{and}\qquad n\,T_g(\varepsilon_j,w_1,\ldots,w_{n-1}),
\end{equation*}
respectively. Hence the symmetric $(n-1)$-linear form associated with $\frac{\partial f}{\partial{x_i}}\boxtimes\frac{\partial g}{\partial{y_j}}$ takes the value
\begin{equation*}
n^2\,T_f(e_i,v_1,\ldots,v_{n-1})
\,T_g(\varepsilon_j,w_1,\ldots,w_{n-1})
\end{equation*}
on $(v_1\otimes w_1,\ldots,v_{n-1}\otimes w_{n-1})$. Therefore the two sides of \eqref{first-derivative-Kronecker} have the same associated symmetric $(n-1)$-linear form on every tuple of decomposable vectors. By Lemma~\ref{lem:Segre-Veronese-determination}, applied with
$n_k=m_k=1$ for $1\leq k\leq n-1$, the two symmetric
$(n-1)$-linear forms therefore agree identically. Hence the
corresponding homogeneous polynomials are equal, proving
\eqref{first-derivative-Kronecker}.
%Since decomposable tensors span $V\otimes W$, multilinearity implies that these forms agree on every tuple in $(V\otimes W)^{\times(n-1)}$. Hence the corresponding homogeneous polynomials are equal, proving \eqref{first-derivative-Kronecker}.
Repeated application of this identity gives the factor
\begin{equation*}
\frac{1}{n(n-1)\cdots(n-r+1)}=\frac{(n-r)!}{n!},
\end{equation*}
and proves \eqref{iterated-derivative-Kronecker}.
\end{proof}

Taking $r=2$ in \eqref{iterated-derivative-Kronecker} gives the global entrywise identity
\begin{equation}\label{Hessian-entrywise-Kronecker}
\frac{\partial^{2}(f\boxtimes g)}{\partial z_{ij}\partial z_{kl}}=\frac{1}{n(n-1)}\left(\frac{\partial^{2}f}{\partial x_i\partial x_k}\boxtimes\frac{\partial^{2}g}{\partial y_j\partial y_l}\right).
\end{equation}
The entrywise identity \eqref{Hessian-entrywise-Kronecker} is equivalent to the following global identity for Hessian matrices.

For matrices $A=(a_{ik})$ and $B=(b_{jl})$ whose entries are homogeneous polynomials on $V$ and $W$, respectively, we write $A\boxtimes B$ for the matrix indexed by pairs $(i,j)$ and $(k,l)$ whose $((i,j),(k,l))$-entry is the polynomial Kronecker product $a_{ik}\boxtimes b_{jl}$. After evaluation at a decomposable point $v\otimes w\in V\otimes W$, this specializes to the ordinary matrix Kronecker product $A(v)\boxtimes B(w)$.

\begin{corollary}[Hessian-matrix identity]
\label{cor:Hessian-matrix-identity}
Let $f\in\Sym^nV$ and $g\in\Sym^nW$. Then
\begin{equation}\label{Hessian-Kronecker}
\mathcal{H}_{f\boxtimes g}=\frac{1}{n(n-1)}\,\mathcal{H}_f\boxtimes\mathcal{H}_g.
\end{equation}
\end{corollary}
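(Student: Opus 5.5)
The corollary is essentially a repackaging of the entrywise identity \eqref{Hessian-entrywise-Kronecker}, which is the case $r=2$ of Proposition~\ref{prop:Kronecker-differentiation}. The plan is to compare the two matrices in \eqref{Hessian-Kronecker} entry by entry, using the index convention for the matrix Kronecker product fixed just before the statement.

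First, I fix the indexing. The coordinates on $V\otimes W$ are $z_{ij}$, indexed by pairs $(i,j)$ with $0\le i\le d_1-1$ and $0\le j\le d_2-1$, ordered lexicographically as in Lemma~\ref{lem:Kronecker-isobaric}. Hence $\mathcal{H}_{f\boxtimes g}$ is the $d_1d_2\times d_1d_2$ matrix whose $((i,j),(k,l))$-entry is $\partial^2(f\boxtimes g)/\partial z_{ij}\partial z_{kl}$. On the other side, by the definition of the polynomial Kronecker product of matrices, the $((i,j),(k,l))$-entry of $\mathcal{H}_f\boxtimes\mathcal{H}_g$ is $\frac{\partial^2 f}{\partial x_i\partial x_k}\boxtimes\frac{\partial^2 g}{\partial y_j\partial y_l}$. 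This is the polynomial Kronecker product of two forms of the same degree $n-2$, so it is a well-defined element of $\Sym^{n-2}(V\otimes W)$ by Definition~\ref{def:KroneckerProduct}.

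Next, I compare entries. Applying \eqref{iterated-derivative-Kronecker} with $r=2$, $(i_1,j_1)=(i,j)$ and $(i_2,j_2)=(k,l)$ gives exactly \eqref{Hessian-entrywise-Kronecker}, with constant $(n-2)!/n!=1/(n(n-1))$. So every entry of the left side of \eqref{Hessian-Kronecker} equals $\frac{1}{n(n-1)}$ times the corresponding entry of $\mathcal{H}_f\boxtimes\mathcal{H}_g$. Since matrices are equal when all their entries agree, the identity follows.

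The only point needing care is the degenerate case $n=2$. Here the entries are constants in $\Sym^0$, and the Kronecker product of scalars should be their product. Definition~\ref{def:KroneckerProduct} with zero arguments gives $T_{f\boxtimes g}=T_fT_g$, consistent with this, and Proposition~\ref{prop:Kronecker-differentiation} covers $r=n=2$, so no separate argument is needed. I do not expect any real obstacle: the substance lies in Proposition~\ref{prop:Kronecker-differentiation}, and the corollary is bookkeeping of indices.
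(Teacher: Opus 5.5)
Your proposal is correct and matches the paper's proof: both compare the $((i,j),(k,l))$-entries of the two sides and apply the $r=2$ case of Proposition~\ref{prop:Kronecker-differentiation}, namely \eqref{Hessian-entrywise-Kronecker}. Your remark on the degenerate case $n=2$ is a harmless addition that the paper leaves implicit; there the entries lie in $\Sym^0$ and the Kronecker product reduces to multiplication of scalars.
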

\begin{proof}
By the definition of Hessian matrix,
\begin{align*}
\big(\mathcal{H}_{f\boxtimes g}\big)_{(i,j),(k,l)} &=
\frac{\partial^{2}(f\boxtimes g)}{\partial z_{ij}\partial z_{kl}}, \\
\big(\mathcal{H}_f\boxtimes\mathcal{H}_g\big)_{(i,j),(k,l)} &=\frac{\partial^{2}f}{\partial x_i\partial x_k}\boxtimes\frac{\partial^{2}g}{\partial y_j\partial y_l}.
\end{align*}
The identity \eqref{Hessian-entrywise-Kronecker} shows that every entry of $\mathcal{H}_{f\boxtimes g}$ agrees with the corresponding entry of $\frac1{n(n-1)}\mathcal{H}_f\boxtimes\mathcal{H}_g$. Hence the two matrices are equal, proving \eqref{Hessian-Kronecker}.
\end{proof}

\begin{remark}
The identity \eqref{Hessian-Kronecker}, together with the symmetry of the Hessian matrices $\mathcal{H}_f$ and $\mathcal{H}_g$, gives
\begin{equation}
\big(\mathcal{H}_{f\boxtimes g}\big)_{(i,j),(k,l)}
=\big(\mathcal{H}_{f\boxtimes g}\big)_{(k,j),(i,l)}
=\big(\mathcal{H}_{f\boxtimes g}\big)_{(i,l),(k,j)}
=\big(\mathcal{H}_{f\boxtimes g}\big)_{(k,l),(i,j)}.
\end{equation}
Thus $\mathcal{H}_{f\boxtimes g}$ is a symmetric block matrix whose individual blocks are themselves symmetric.
\end{remark}

Although the Hessian-matrix identity \eqref{Hessian-Kronecker} holds globally on $V\otimes W$, taking determinants does not, in general, yield the corresponding identity for Hessian polynomials. After evaluation at a decomposable tensor $v\otimes w$, however, the right-hand side of \eqref{Hessian-Kronecker} becomes the ordinary Kronecker product of the evaluated Hessian matrices, so the Kronecker determinant formula applies. Thus the corresponding determinant identity holds on the Segre variety.

In general, equality on the Segre variety does not imply equality as polynomials on $V\otimes W$: their difference may be a nonzero element of the Segre ideal, which is generated by the $2\times2$ minors of the coordinate matrix $(z_{ij})$. Consequently, the corresponding multiplicative factorization of the Hessian polynomial need not hold on the whole tensor-product space. The persistent case has additional structure, which we exploit below.

\begin{lemma}[Hessian identity at decomposable points]\label{lem:Kronecker-Hessian-decomposable}
For every $v\in V$ and $w\in W$, one has
\begin{equation}\label{Hessian-Kronecker-decomposable}
\mathcal{H}_{f\boxtimes g}(v\otimes w)=
\frac{1}{n(n-1)}\,\mathcal{H}_f(v)\boxtimes\mathcal{H}_g(w)
\end{equation}
as bilinear forms on $V\otimes W$. Consequently,
\begin{equation}\label{det-Hessian-Kronecker-decomposable}
\Hess(f\boxtimes g)(v\otimes w)=
\left(\frac{1}{n(n-1)}\right)^{\!\!d_1d_2}
\Hess(f)(v)^{d_2}\Hess(g)(w)^{d_1}.
\end{equation}
\end{lemma}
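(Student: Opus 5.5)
The plan is to evaluate the global Hessian-matrix identity of Corollary~\ref{cor:Hessian-matrix-identity} at the point $v\otimes w$, and then take determinants using the standard Kronecker determinant formula.

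\textbf{Step 1: reduce to one scalar identity.} Fix indices $(i,j),(k,l)$. By Corollary~\ref{cor:Hessian-matrix-identity}, the $((i,j),(k,l))$-entry of $\mathcal{H}_{f\boxtimes g}$ is the polynomial $\frac{1}{n(n-1)}\,a\boxtimes b$, where $a=\partial^2 f/\partial x_i\partial x_k\in\Sym^{n-2}V$ and $b=\partial^2 g/\partial y_j\partial y_l\in\Sym^{n-2}W$. It therefore suffices to check that
\begin{equation*}
(a\boxtimes b)(v\otimes w)=a(v)\,b(w)
\end{equation*}
for every $v\in V$ and $w\in W$. This is where the paper's remark that evaluation at decomposable points gives the ordinary Kronecker product must be justified.

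\textbf{Step 2: prove the scalar identity.} Use the restitution convention \eqref{restitution-convention} together with Definition~\ref{def:KroneckerProduct} in degree $n-2$:
\begin{equation*}
(a\boxtimes b)(v\otimes w)=T_{a\boxtimes b}(v\otimes w,\ldots,v\otimes w)=T_a(v,\ldots,v)\,T_b(w,\ldots,w)=a(v)\,b(w).
\end{equation*}
The degenerate case $n=2$ needs a separate check. Here the entries are constants, and the Kronecker product of constants in degree $0$ is just their product, so the identity holds trivially. I regard Step 2 as the only genuine point of care, namely getting the normalization of $\boxtimes$ right in degree $n-2$. That normalization is exactly what Definition~\ref{def:KroneckerProduct} fixes, so the step is not a real obstacle.

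\textbf{Step 3: assemble the matrix identity.} Combining Steps 1 and 2 entrywise, the matrix $\mathcal{H}_{f\boxtimes g}(v\otimes w)$ has $((i,j),(k,l))$-entry
\begin{equation*}
\frac{1}{n(n-1)}\,\mathcal{H}_f(v)_{ik}\,\mathcal{H}_g(w)_{jl}.
\end{equation*}
With the lexicographic ordering of the product basis, this is the matrix of $\frac{1}{n(n-1)}\mathcal{H}_f(v)\boxtimes\mathcal{H}_g(w)$, which proves \eqref{Hessian-Kronecker-decomposable} as bilinear forms.

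\textbf{Step 4: take determinants.} Pull out the scalar, which contributes $c^{d_1d_2}$ with $c=\frac{1}{n(n-1)}$, since the matrix has size $d_1d_2$. Then apply the classical formula $\det(A\boxtimes B)=\det(A)^{d_2}\det(B)^{d_1}$ for $A$ of size $d_1$ and $B$ of size $d_2$. This follows from $A\boxtimes B=(A\boxtimes I_{d_2})(I_{d_1}\boxtimes B)$: the second factor is block diagonal with $d_1$ copies of $B$, and the first is permutation-similar to $d_2$ copies of $A$. This yields \eqref{det-Hessian-Kronecker-decomposable}.
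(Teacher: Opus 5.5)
Your proposal is correct and matches the paper's proof essentially step for step: you evaluate the entrywise identity \eqref{Hessian-Kronecker-decomposable}'s source \eqref{Hessian-entrywise-Kronecker} at $v\otimes w$, use Definition~\ref{def:KroneckerProduct} with the restitution convention to get $(a\boxtimes b)(v\otimes w)=a(v)\,b(w)$, and then apply $\det(A\boxtimes B)=\det(A)^{d_2}\det(B)^{d_1}$. Your separate check of the degree-zero case $n=2$ and your justification of the Kronecker determinant formula are small additions that the paper leaves implicit.
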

\begin{proof}
Evaluating \eqref{Hessian-entrywise-Kronecker} at a decomposable point $v\otimes w$ and applying Definition~\ref{def:KroneckerProduct} to the homogeneous polynomials $\partial^{2}f/\partial x_i\partial x_k$ and $\partial^{2}g/\partial y_j\partial y_l$ gives
\begin{equation*}
\left(\frac{\partial^{2}f}{\partial x_i\partial x_k}
\boxtimes\frac{\partial^{2}g}{\partial y_j\partial y_l}
\right)(v\otimes w)=\frac{\partial^{2}f}{\partial x_i\partial x_k}(v)\,\frac{\partial^{2}g}{\partial y_j\partial y_l}(w).
\end{equation*}
Therefore, with rows and columns indexed by the pairs $(i,j)$ and $(k,l)$, respectively, identity \eqref{Hessian-Kronecker-decomposable} follows. Taking determinants in \eqref{Hessian-Kronecker-decomposable} and using $\det(A\boxtimes B)=\det(A)^{d_2}\det(B)^{d_1}$ gives
\begin{align*}
\Hess(f\boxtimes g)(v\otimes w)&=\det \left(\frac{1}{n(n-1)} \mathcal{H}_f(v)\boxtimes\mathcal{H}_g(w)
\right) \\
&=\left(\frac{1}{n(n-1)}\right)^{\!\!d_1d_2}
\Hess(f)(v)^{d_2}\Hess(g)(w)^{d_1},
\end{align*}
which proves \eqref{det-Hessian-Kronecker-decomposable}.
\end{proof}

\begin{corollary}\label{cor:Hessian-nonvanishing}
If $\Hess(f)\not\equiv 0$ and $\Hess(g)\not\equiv 0$, then $\Hess(f\boxtimes g)\not\equiv 0$.
\end{corollary}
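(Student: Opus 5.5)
The plan is to evaluate the Hessian of $f\boxtimes g$ at a single well-chosen decomposable point and use Lemma~\ref{lem:Kronecker-Hessian-decomposable}. Since $\Hess(f)\not\equiv0$ is a nonzero polynomial on $V$, its nonvanishing locus is a nonempty Zariski open set, so we can pick $v_0\in V$ with $\Hess(f)(v_0)\neq0$. In the same way we pick $w_0\in W$ with $\Hess(g)(w_0)\neq0$. These points exist because $\mathbbm{C}$ is infinite, and a nonzero polynomial over an infinite field cannot vanish at every point.

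Next we apply identity \eqref{det-Hessian-Kronecker-decomposable} at the decomposable point $v_0\otimes w_0$:
\begin{equation*}
\Hess(f\boxtimes g)(v_0\otimes w_0)=\left(\frac{1}{n(n-1)}\right)^{\!d_1d_2}\Hess(f)(v_0)^{d_2}\,\Hess(g)(w_0)^{d_1}.
\end{equation*}
Every factor on the right is nonzero. The scalar $1/(n(n-1))$ is finite and nonzero because $n\geq2$, and the other two factors are powers of nonzero complex numbers. Hence the polynomial $\Hess(f\boxtimes g)$ takes a nonzero value at the point $v_0\otimes w_0$, so it is not identically zero.

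There is no real obstacle in this argument. The only point to check is that the lemma applies to $f$ and $g$ as stated, with no persistence hypothesis, and it does. No global determinant identity is needed, because nonvanishing of a polynomial is witnessed by a single point, and that point can be taken on the Segre variety.
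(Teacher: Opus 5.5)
Your proposal is correct and follows the paper's own proof: choose $v$ and $w$ with $\Hess(f)(v)\neq0$ and $\Hess(g)(w)\neq0$, then read off $\Hess(f\boxtimes g)(v\otimes w)\neq0$ from the decomposable-point identity \eqref{det-Hessian-Kronecker-decomposable}. You also correctly note that this identity needs no persistence hypothesis, and that the scalar $1/(n(n-1))$ is nonzero because $n\geq2$.
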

\begin{proof}
Choose $v\in V$ and $w\in W$ such that $\Hess(f)(v)\neq 0$ and $\Hess(g)(w)\neq 0$. Equation~\eqref{det-Hessian-Kronecker-decomposable} then gives $\Hess(f\boxtimes g)(v\otimes w)\neq 0$.
\end{proof}

For a form in the distinguished isobaric weight space, the following result identifies the scalar appearing in its Hessian and shows that this scalar is multiplicative under Kronecker products.

\begin{proposition}[Multiplicativity of the Hessian coefficient]
\label{prop:hessian-coefficient}
Let $f\in\Sym^n\mathbbm{C}^{d_1}$ and $g\in\Sym^n\mathbbm{C}^{d_2}$ be isobaric of weights $d_1-1$ and $d_2-1$, respectively. Write
\begin{equation}\label{hessian-f-g}
\Hess(f) = \lambda_f\,x_0^{d_1(n-2)}, \qquad
\Hess(g) = \lambda_g\,y_0^{d_2(n-2)},
\end{equation}
where $\lambda_f,\lambda_g\in\mathbbm{C}$. Then
\begin{equation}\label{Hessian-product-coefficient}
\Hess(f\boxtimes g) = \lambda_{f\boxtimes g}\,z_{00}^{d_1d_2(n-2)},
\end{equation}
where
\begin{equation}\label{lambda-multiplicativity}
\lambda_{f\boxtimes g}=\left(\frac{1}{n(n-1)} \right)^{\!\!d_1d_2}\lambda_f^{d_2}\lambda_g^{d_1}.
\end{equation}
\end{proposition}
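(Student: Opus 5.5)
By Lemma~\ref{lem:Kronecker-isobaric}, $f\boxtimes g$ is isobaric of weight $d_1d_2-1$ with respect to the weights $\wt(z_{ij})=d_2i+j$. These weights run through $0,1,\ldots,d_1d_2-1$ exactly once under the lexicographic ordering of the basis of $V\otimes W$, so after relabelling $z_{ij}\mapsto z_{d_2i+j}$ we are in the setting of Lemma~\ref{lem:Hessian} with $d=d_1d_2$. That lemma gives
\[
\Hess(f\boxtimes g)=\lambda_{f\boxtimes g}\,z_{00}^{d_1d_2(n-2)}
\]
for a uniquely determined scalar $\lambda_{f\boxtimes g}\in\mathbbm{C}$, since $z_{00}$ is the variable of weight $0$. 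This establishes \eqref{Hessian-product-coefficient}, and it remains only to identify the scalar.

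To compute $\lambda_{f\boxtimes g}$, I will evaluate both sides at a decomposable point $v\otimes w$. The coordinate $z_{00}$ of $v\otimes w$ equals $v_0w_0$, where $v_0=x_0(v)$ and $w_0=y_0(w)$. The left-hand side is then computed by Lemma~\ref{lem:Kronecker-Hessian-decomposable} together with \eqref{hessian-f-g}:
\[
\Hess(f\boxtimes g)(v\otimes w)=\left(\tfrac{1}{n(n-1)}\right)^{d_1d_2}\lambda_f^{d_2}\lambda_g^{d_1}\,v_0^{d_1d_2(n-2)}w_0^{d_1d_2(n-2)}.
\]
The right-hand side evaluates to $\lambda_{f\boxtimes g}(v_0w_0)^{d_1d_2(n-2)}$.

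Finally, choose $v=e_0$ and $w=\varepsilon_0$, so that $v_0=w_0=1$. Comparing the two expressions then yields \eqref{lambda-multiplicativity}.

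The only point needing care is the reduction to Lemma~\ref{lem:Hessian}. One must check that the relabelled weights are exactly the standard weights $0,\ldots,d_1d_2-1$, which holds because $(i,j)\mapsto d_2i+j$ is a bijection onto $\{0,\ldots,d_1d_2-1\}$. One must also check that the Hessian determinant is unchanged by this simultaneous permutation of rows and columns, which is clear. Apart from these checks, the argument needs no global determinant identity, because the form of $\Hess(f\boxtimes g)$ is already fixed globally by isobaricity.
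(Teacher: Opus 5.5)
Your proposal is correct and matches the paper's proof: Lemma~\ref{lem:Kronecker-isobaric} and Lemma~\ref{lem:Hessian} fix the form $\lambda_{f\boxtimes g}z_{00}^{d_1d_2(n-2)}$, and evaluating at $e_0\otimes\varepsilon_0$ via Lemma~\ref{lem:Kronecker-Hessian-decomposable} identifies the scalar. Your explicit check that $(i,j)\mapsto d_2i+j$ is a bijection onto $\{0,\ldots,d_1d_2-1\}$, so that Lemma~\ref{lem:Hessian} applies after relabelling, is a detail the paper leaves implicit.
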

\begin{proof}
By Lemma~\ref{lem:Kronecker-isobaric}, the tensor
$f\boxtimes g$ is isobaric of weight $d_1d_2-1$. Hence
Lemma~\ref{lem:Hessian}, applied in dimension $d_1d_2$, gives $\Hess(f\boxtimes g)=\lambda_{f\boxtimes g}\,z_{00}^{d_1d_2(n-2)}$ for a uniquely determined scalar $\lambda_{f\boxtimes g}\in\mathbbm{C}$. Let $e_0\in\mathbbm{C}^{d_1}$ and
$\varepsilon_0\in\mathbbm{C}^{d_2}$ denote the first basis vectors. Since $z_{00}(e_0\otimes\varepsilon_0)=1$, equation~\eqref{Hessian-product-coefficient} gives $\lambda_{f\boxtimes g}=\Hess(f\boxtimes g)(e_0\otimes\varepsilon_0)$. Applying equation~\eqref{det-Hessian-Kronecker-decomposable} at $v=e_0$ and $w=\varepsilon_0$, we obtain
\begin{equation*}
\lambda_{f\boxtimes g}=\left(\frac{1}{n(n-1)}\right)^{\!\!d_1d_2}
\Hess(f)(e_0)^{d_2}\Hess(g)(\varepsilon_0)^{d_1}.
\end{equation*}
By equation~\eqref{hessian-f-g}, $\Hess(f)(e_0)=\lambda_f$ and $\Hess(g)(\varepsilon_0)=\lambda_g$. Substitution proves \eqref{lambda-multiplicativity}.
\end{proof}

Proposition~\ref{prop:hessian-coefficient} shows that the distinguished Hessian coefficient is multiplicative under the Kronecker product. The sufficient Hessian criterion of Theorem~\ref{thm:hessian} therefore yields the following theorem.

\begin{theorem}\label{thm:Persistent-Kronecker}
Let $f\in\Sym^n\mathbbm{C}^{d_1}$ and $g\in\Sym^n\mathbbm{C}^{d_2}$ be isobaric of weights $d_1-1$ and $d_2-1$, respectively. If $\Hess(f)\not\equiv 0$ and $\Hess(g)\not\equiv 0$, then $f\boxtimes g$ is persistent.
\end{theorem}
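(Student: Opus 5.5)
The plan is to combine Proposition~\ref{prop:hessian-coefficient} with the sufficient criterion in Theorem~\ref{thm:hessian}(i). First I would apply Lemma~\ref{lem:Hessian} to $f$ and to $g$. This gives $\Hess(f)=\lambda_f\,x_0^{d_1(n-2)}$ and $\Hess(g)=\lambda_g\,y_0^{d_2(n-2)}$ for uniquely determined scalars. The hypotheses $\Hess(f)\not\equiv0$ and $\Hess(g)\not\equiv0$ then say exactly that $\lambda_f\neq0$ and $\lambda_g\neq0$.

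Next I would invoke Proposition~\ref{prop:hessian-coefficient}. Its proof uses Lemma~\ref{lem:Kronecker-isobaric} to see that $f\boxtimes g$ is isobaric of weight $d_1d_2-1$ in the lexicographically weighted coordinates $z_{ij}$. It yields
\[
\Hess(f\boxtimes g)=\lambda_{f\boxtimes g}\,z_{00}^{d_1d_2(n-2)},\qquad \lambda_{f\boxtimes g}=\left(\frac{1}{n(n-1)}\right)^{\!d_1d_2}\lambda_f^{d_2}\lambda_g^{d_1}.
\]
Since $n\geq2$, I would first dispose of the case $n=2$ separately. In that case the exponent is $0$, and the Hessian is the constant $\lambda_{f\boxtimes g}$. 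Theorem~\ref{thm:hessian}(i) still applies formally with $\ell^0=1$, and in any case a nonzero Hessian determinant of a quadric means the quadric is nondegenerate, which is persistence for $n=2$. For $n\geq 3$, the factor $1/(n(n-1))$ is a nonzero complex number, so $\lambda_{f\boxtimes g}\in\mathbbm{C}^\times$.

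Finally, I would set $\ell=z_{00}$, a nonzero linear form on $V\otimes W$, and note that $\dim(V\otimes W)=d_1d_2$. The identity then reads $\Hess(f\boxtimes g)=\lambda\,\ell^{(d_1d_2)(n-2)}$ with $\lambda\neq0$. This is precisely the hypothesis of Theorem~\ref{thm:hessian}(i) in dimension $d_1d_2$, so $f\boxtimes g$ is persistent.

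I expect no real obstacle here. The only care needed is bookkeeping: one must check that the coordinates $z_{ij}$ with weights $d_2i+j$ form a relabeling of $0,\ldots,d_1d_2-1$, so that Lemma~\ref{lem:Hessian} and Theorem~\ref{thm:hessian} apply verbatim in $\mathbbm{C}^{d_1d_2}$. One must also confirm that the hypothesis $\Hess\not\equiv0$ translates into nonvanishing of the distinguished coefficients. Alternatively, one could argue nonvanishing directly via Corollary~\ref{cor:Hessian-nonvanishing}.
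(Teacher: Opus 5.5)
Your proposal is correct and follows the paper's proof essentially verbatim. Lemma~\ref{lem:Hessian} turns the nonvanishing hypotheses into $\lambda_f,\lambda_g\neq0$, Proposition~\ref{prop:hessian-coefficient} gives $\Hess(f\boxtimes g)=\lambda_{f\boxtimes g}\,z_{00}^{d_1d_2(n-2)}$ with $\lambda_{f\boxtimes g}\neq0$, and Theorem~\ref{thm:hessian}(i) concludes. Your separate treatment of $n=2$ is harmless but unnecessary: the factor $1/(n(n-1))$ is nonzero for every $n\geq2$, and part (i) already covers the exponent-zero case.
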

\begin{proof}
By Lemma~\ref{lem:Hessian} and the nonvanishing hypotheses, there exist nonzero scalars $\lambda_f,\lambda_g\in\mathbbm{C}^{\times}$ such that $\Hess(f)=\lambda_f\,x_0^{d_1(n-2)}$ and $\Hess(g)=\lambda_g\,y_0^{d_2(n-2)}$. By Proposition~\ref{prop:hessian-coefficient},
\begin{equation*}
\Hess(f\boxtimes g)=\left(\frac{1}{n(n-1)}\right)^{\!\!d_1d_2}
\lambda_f^{d_2}\lambda_g^{d_1}z_{00}^{d_1d_2(n-2)}.
\end{equation*}
Its coefficient is nonzero. Theorem~\ref{thm:hessian} therefore implies that $f\boxtimes g$ is persistent.
\end{proof}

\begin{corollary}[Closure under Kronecker products]\label{cor:persistent-isobaric-product}
Let $f\in\Sym^n\mathbbm{C}^{d_1}$ and $g\in\Sym^n\mathbbm{C}^{d_2}$ be persistent tensors that are isobaric of weights $d_1-1$ and
$d_2-1$, respectively. Then $f\boxtimes g$ is persistent.
\end{corollary}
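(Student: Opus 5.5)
The corollary follows once we know that persistence of an isobaric form of the distinguished weight forces its Hessian to be nonzero; after that, Theorem~\ref{thm:Persistent-Kronecker} applies directly. So the plan is: show $\Hess(f)\not\equiv0$ and $\Hess(g)\not\equiv0$, then invoke Theorem~\ref{thm:Persistent-Kronecker}.

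\textbf{Step 1: persistence gives a nonzero Hessian.} Since $f$ is persistent, Theorem~\ref{thm:hessian}(iv) gives a \emph{nonzero} $h\in\Sym^{n-2}\mathbbm{C}^{d_1}$ with $\Hess(f)=h^{d_1}$. As $h\neq0$ and $\mathbbm{C}[x_0,\ldots,x_{d_1-1}]$ is an integral domain, $h^{d_1}\neq0$, so $\Hess(f)\not\equiv0$. The same argument applies to $g$.

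\textbf{Step 2: compatibility with Lemma~\ref{lem:Hessian}.} Lemma~\ref{lem:Hessian} gives $\Hess(f)=\lambda_f x_0^{d_1(n-2)}$. Step~1 then forces $\lambda_f\neq0$, and likewise $\lambda_g\neq0$. This step is only a consistency check; Theorem~\ref{thm:Persistent-Kronecker} needs only the nonvanishing hypotheses.

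\textbf{Step 3: conclude.} Apply Theorem~\ref{thm:Persistent-Kronecker}. It shows that $\Hess(f\boxtimes g)$ is a nonzero multiple of $z_{00}^{d_1d_2(n-2)}$, so $f\boxtimes g$ is persistent by Theorem~\ref{thm:hessian}(i).

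\textbf{Main point to check.} The only substantive issue is whether Theorem~\ref{thm:hessian}(iv) really guarantees $h\neq0$, as opposed to the degenerate case $\Hess(f)=0^{d_1}$. The statement explicitly asserts that $h$ is nonzero, so this holds.

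Alternatively, one can avoid (iv) and argue from (iii) with $P_f\neq0$. By \eqref{partial-polarization}, specializing all $v^{(i)}=x$ turns $f_{x,\ldots,x}(y)$ into the quadratic $\tfrac{n!}{2}T_f(y,y,x,\ldots,x)$. Its Hessian in $y$ equals $\tfrac{n!}{n(n-1)}\mathcal{H}_f(x)$, and $P_f(x,\ldots,x)\not\equiv0$ because polarization recovers a multilinear form from its restitution. Since (iv) already packages this argument, I would simply cite it.
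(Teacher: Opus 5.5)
Your argument is correct and is essentially the paper's proof: Theorem~\ref{thm:hessian}\,(iv) gives $\Hess(f),\Hess(g)\not\equiv0$, equivalently $\lambda_f,\lambda_g\neq0$ in Lemma~\ref{lem:Hessian}, and Theorem~\ref{thm:Persistent-Kronecker} finishes. Citing (iv) is also the better choice than your alternative via (iii), because multilinearity of $P_f$ alone does not exclude an alternating $P_f$ (whose restitution $P_f(x,\ldots,x)$ vanishes), so that route would need an extra symmetry argument.
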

\begin{proof}
By Lemma~\ref{lem:Hessian}, there exist scalars $\lambda_f,\lambda_g\in\mathbbm{C}$ such that $\Hess(f)=\lambda_f x_0^{d_1(n-2)}$ and $\Hess(g)=\lambda_g y_0^{d_2(n-2)}$. Since $f$ and $g$ are persistent, Theorem~\ref{thm:hessian}\,\textnormal{(iv)} implies that their Hessians are nonzero. Hence $\lambda_f,\lambda_g\neq0$, and the conclusion follows from Theorem~\ref{thm:Persistent-Kronecker}.
\end{proof}

\begin{example}\label{example-isobaric}
Consider the binary cubic form $f=x_0^2x_1\in\Sym^3\mathbbm{C}^2$ and the ternary cubic form $g=y_0^2y_2+y_0y_1^2\in\Sym^3\mathbbm{C}^3$. The form $f$ is isobaric of the distinguished weight $1=\dim\mathbbm{C}^2-1$, while both monomials of $g$ have weight $2=\dim\mathbbm{C}^3-1$, so $g$ is isobaric of the distinguished weight as well. Moreover, $\Hess(f)=-4x_0^2$ and $\Hess(g)=-8y_0^3$, and hence both $f$ and $g$ are persistent by Theorem~\ref{thm:hessian}. Let $z_{ij}$, with $0\leq i\leq1$ and $0\leq j\leq2$, denote the coordinates on $\mathbbm{C}^2\otimes\mathbbm{C}^3$ corresponding to $e_i\otimes\varepsilon_j$. With the restitution convention
\eqref{restitution-convention}, the nonzero coefficients of the associated symmetric trilinear forms are $T_f(e_0,e_0,e_1)=1/3$, $T_g(\varepsilon_0,\varepsilon_0,\varepsilon_2)=1/3$, and $T_g(\varepsilon_0,\varepsilon_1,\varepsilon_1)=1/3$, together with those obtained by symmetry. Consequently, $f\boxtimes g=\frac13 z_{00}^2z_{12}+\frac23 z_{00}z_{02}z_{10}+\frac23 z_{00}z_{01}z_{11}+\frac13 z_{01}^2z_{10}$. Although none of the four monomials occurring in this expression is persistent by itself, their resulting linear combination is persistent. Indeed, $f$ and $g$ are persistent and isobaric of their distinguished weights, so Corollary~\ref{cor:persistent-isobaric-product} applies. Finally, direct computation gives $\Hess(f\boxtimes g)=-\frac{64}{729}z_{00}^6$ in agreement with Proposition~\ref{prop:hessian-coefficient}. The Kronecker product $f\boxtimes g$ is illustrated in Figure~\ref{fig:Kronecker-Persistent}.
\end{example}

\begin{corollary}[Iterated Kronecker products] \label{cor:iterated-products}
For $1\leq j\leq k$, let $f_j\in\Sym^n\mathbbm{C}^{d_j}$ be persistent and isobaric of weight $d_j-1$. Then the iterated Kronecker product $\boxtimes_{j=1}^{k} f_j$ is persistent.
\end{corollary}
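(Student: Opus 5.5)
The plan is induction on $k$, strengthening the claim to: the iterated product $F_k\coloneqq\boxtimes_{j=1}^{k}f_j$ is persistent \emph{and} isobaric of weight $D_k-1$, where $D_k\coloneqq d_1\cdots d_k$. The isobaric coordinates on $\mathbbm{C}^{D_k}$ are the iterated lexicographic ones supplied by Lemma~\ref{lem:Kronecker-isobaric}. The bracketing is fixed as $F_k=F_{k-1}\boxtimes f_k$, under the identification $\mathbbm{C}^{d_1}\otimes\cdots\otimes\mathbbm{C}^{d_k}\simeq\mathbbm{C}^{D_{k-1}}\otimes\mathbbm{C}^{d_k}$. By Definition~\ref{def:KroneckerProduct}, $T_{F_k}$ evaluated on decomposable vectors is the product of the $T_{f_j}$. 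Lemma~\ref{lem:Segre-Veronese-determination} then shows that any other bracketing gives the same tensor, so the statement does not depend on the choice.

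The base case $k=1$ is the hypothesis itself. For the inductive step, assume $F_{k-1}\in\Sym^n\mathbbm{C}^{D_{k-1}}$ is persistent and isobaric of weight $D_{k-1}-1$. Its coordinates $u_I$ are indexed by multi-indices $I=(i_1,\ldots,i_{k-1})$ ordered lexicographically and carry weights $0,\ldots,D_{k-1}-1$. The factor $f_k$ is persistent and isobaric of weight $d_k-1$. Corollary~\ref{cor:persistent-isobaric-product}, applied to the pair $(F_{k-1},f_k)$, gives persistence of $F_k$. Lemma~\ref{lem:Kronecker-isobaric} gives that $F_k$ is isobaric of weight $D_{k-1}d_k-1=D_k-1$ for the weights $\wt(z_{I,j})=d_kI+j$. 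Here $I$ also stands for its position in the lexicographic order, so the new weights are the positions in the lexicographic order on $k$-tuples. This closes the induction.

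As a byproduct, iterating Proposition~\ref{prop:hessian-coefficient} gives an explicit nonzero Hessian coefficient:
\begin{equation*}
\lambda_{F_k}=\left(\frac{1}{n(n-1)}\right)^{\!(k-1)D_k}\prod_{j=1}^{k}\lambda_{f_j}^{D_k/d_j}.
\end{equation*}

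The main obstacle is bookkeeping. The weight $D_{k-1}-1$ and the coordinates used in the inductive hypothesis must coincide with the ordered basis in which Lemma~\ref{lem:Kronecker-isobaric} is applied at the next step. Both lemmas require the weight of the $m$-th basis vector to be exactly $m$, and the iterated lexicographic ordering ensures this.
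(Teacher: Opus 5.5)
Your proposal is correct and follows the paper's own proof: induction on $k$, with Lemma~\ref{lem:Kronecker-isobaric} carrying the distinguished isobaric weight forward and Corollary~\ref{cor:persistent-isobaric-product} carrying persistence forward. Your additions are also correct: you make explicit that the lexicographic weights $\wt(z_{I,j})=d_kI+j$ are exactly the positions in the lexicographic order, which is what lets the induction close, and your iterated coefficient formula for $\lambda_{F_k}$ is right.
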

\begin{proof}
The assertion follows by induction. At each step, Lemma~\ref{lem:Kronecker-isobaric} shows that the partial Kronecker product remains isobaric of the distinguished weight in its ambient dimension, while Corollary~\ref{cor:persistent-isobaric-product} preserves persistence.
\end{proof}

\begin{corollary}[Kronecker powers]
\label{cor:Kronecker-powers}
Let $f\in\Sym^n\mathbbm{C}^d$ be persistent and isobaric of weight $d-1$. Then the Kronecker power $f^{\boxtimes k}$ is persistent for every integer $k\geq1$.
\end{corollary}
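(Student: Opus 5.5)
The plan is to obtain this as the special case $f_1=\cdots=f_k=f$ of Corollary~\ref{cor:iterated-products}, and to spell out the induction on $k$ so that the ambient coordinates and weights are explicit. For $k=1$ there is nothing to prove, since $f^{\boxtimes 1}=f$ is persistent by hypothesis.

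For the inductive step, assume $f^{\boxtimes (k-1)}\in\Sym^n\mathbbm{C}^{d^{k-1}}$ is persistent and isobaric of weight $d^{k-1}-1$. The isobaric weight is taken with respect to the lexicographically ordered product basis, with $\wt(z_{ij})=d\,i+j$ iterated. Write $f^{\boxtimes k}=f^{\boxtimes(k-1)}\boxtimes f$. Lemma~\ref{lem:Kronecker-isobaric}, applied with $d_1=d^{k-1}$ and $d_2=d$, shows that $f^{\boxtimes k}$ is isobaric of weight $d^{k-1}d-1=d^k-1$ in the lexicographic basis of $\mathbbm{C}^{d^{k-1}}\otimes\mathbbm{C}^d\simeq\mathbbm{C}^{d^k}$. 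Corollary~\ref{cor:persistent-isobaric-product} then gives that $f^{\boxtimes k}$ is persistent. This closes the induction.

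The only point requiring care is bookkeeping. One must check that the weight assignment used on $\mathbbm{C}^{d^{k-1}}$ at step $k-1$ is exactly the one Lemma~\ref{lem:Kronecker-isobaric} requires: the lexicographic index $0,\dots,d^{k-1}-1$ serving as the weight of each basis vector. This holds because the lexicographic order on iterated product bases is associative, with $d_2 i + j$ recursively giving the base-$d$ index.

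Optionally, one can record the explicit Hessian by iterating Proposition~\ref{prop:hessian-coefficient}:
\[
\lambda_{f^{\boxtimes k}}=\Big(\tfrac{1}{n(n-1)}\Big)^{d^k}\lambda_{f^{\boxtimes(k-1)}}^{\,d}\,\lambda_f^{\,d^{k-1}}\neq0,
\]
which confirms nonvanishing directly via Theorem~\ref{thm:hessian}(i).
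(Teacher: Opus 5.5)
Your proposal is correct and follows the paper's proof: the paper simply applies Corollary~\ref{cor:iterated-products} with $f_1=\cdots=f_k=f$. That corollary is itself proved by exactly the induction you spell out, using Lemma~\ref{lem:Kronecker-isobaric} and Corollary~\ref{cor:persistent-isobaric-product}; your lexicographic weight bookkeeping and the iterated coefficient formula from Proposition~\ref{prop:hessian-coefficient} are both accurate.
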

\begin{proof}
Apply Corollary~\ref{cor:iterated-products} with
$f_1=\cdots=f_k=f$.
\end{proof}

\begin{example}
Up to multiplication by a nonzero scalar, the binary $\mathsf{W}$-tensor, corresponding to the $n$-qubit state $|\mathsf{W}_n\rangle=\frac{1}{\sqrt{n}} \sum_{i=1}^{n}|0\rangle^{\otimes(i-1)} |1\rangle|0\rangle^{\otimes(n-i)}$ in quantum information theory \cite{DVC}, is represented by the polynomial $\mathsf{W}_n=x_0^{n-1}x_1$, which is isobaric of the distinguished weight. Corollary~\ref{cor:Kronecker-powers} therefore implies that every Kronecker power $\mathsf{W}_n^{\boxtimes k}$ is persistent. This yields an explicit infinite family of persistent tensors in
\begin{equation*}
\Sym^n\!\left((\mathbbm{C}^2)^{\otimes k}\right)
\simeq
\Sym^n\mathbbm{C}^{2^k}.
\end{equation*}
\end{example}

%%%%%%%%%%%%%%%%%%%%%%%%%%%%%%%%%%%%%%%%%%%%%%%%%%%%%%%%%%%%
%%%%%%%%%%%%%%%%%%%%%%%%%%%%%%%%%%%%%%%%%%%%%%%%%%%%%%%%%%%%
\section{Consequences in the classified cases}

The classification theorem of~\cite[Theorem~5]{GO25} provides normal forms for symmetric persistent tensors in the low-dimensional cases needed below. Up to linear equivalence and multiplication by a nonzero scalar, the normal form in $d=2$ is
\begin{equation}
f=x_0^{n-1}x_1,
\end{equation}
whereas in $d=3$ it is
\begin{equation}
f=x_0^{n-2}(x_0x_2+x_1^2).
\end{equation}
For cubic forms in $d=4$, the classified family is represented by
\begin{equation}
f=\lambda_1x_0^2x_3+\lambda_2x_0x_1x_2+\lambda_3x_1^3,
\qquad
\lambda_1\lambda_2\neq 0,
\end{equation}
which is isobaric of weight three.

For an invertible linear map $A$ on the ambient vector space, we use the convention
\begin{equation}
(A\cdot f)(x)=f(A^{-1}x).
\end{equation}
Conciseness and persistence are invariant under this action. Indeed, an invertible change of coordinates induces an invertible change on the space of first derivatives and carries the exceptional hyperplane in the definition of persistence to another hyperplane. The assertion follows inductively on the degree. Persistence is also unchanged by multiplication by a nonzero scalar.

Moreover, if $A\in\operatorname{GL}(V)$ and
$B\in\operatorname{GL}(W)$, then the Kronecker product is equivariant:
\begin{equation}\label{Kronecker-equivariance}
(A\cdot f)\boxtimes(B\cdot g)=(A\boxtimes B)\cdot(f\boxtimes g).
\end{equation}

\begin{corollary}[Closure in the classified cases]\label{cor:closure-classified}
The following statements hold:
\begin{enumerate}[label=\textup{(\roman*)}]
\item If $d_1,d_2\leq3$, then the Kronecker product of two persistent tensors in $\Sym^n\mathbbm{C}^{d_1}$ and
$\Sym^n\mathbbm{C}^{d_2}$ is persistent.
\item If $n=3$ and $d_1,d_2\leq4$, then the Kronecker product of two persistent tensors in $\Sym^3\mathbbm{C}^{d_1}$ and $\Sym^3\mathbbm{C}^{d_2}$ is persistent.
\end{enumerate}
\end{corollary}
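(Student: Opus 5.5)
The plan is to reduce both parts to Corollary~\ref{cor:persistent-isobaric-product}, using the classification normal forms and the equivariance \eqref{Kronecker-equivariance}. Let $f\in\Sym^n\mathbbm{C}^{d_1}$ and $g\in\Sym^n\mathbbm{C}^{d_2}$ be persistent. By \cite[Theorem~5]{GO25}, in each case covered by (i) or (ii) there are $A\in\operatorname{GL}_{d_1}$, $B\in\operatorname{GL}_{d_2}$ and nonzero scalars $\alpha,\beta$ such that $f=\alpha\,A\cdot f_0$ and $g=\beta\,B\cdot g_0$, where $f_0,g_0$ are the listed normal forms.

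The key observation is that every listed normal form is isobaric of the distinguished weight $d-1$:
\begin{itemize}
\item for $d=2$, $x_0^{n-1}x_1$ has weight $1$;
\item for $d=3$, both $x_0^{n-1}x_2$ and $x_0^{n-2}x_1^2$ have weight $2$;
\item for cubics with $d=4$, the monomials $x_0^2x_3$, $x_0x_1x_2$ and $x_1^3$ all have weight $3$.
\end{itemize}
The case $d=1$ should also be covered. There persistence forces $f=\alpha x_0^n$ with $\alpha\neq0$, which is isobaric of weight $0=d-1$. I would check that this is either part of the classification or immediate from conciseness: a concise form in one variable is a nonzero multiple of $x_0^n$.

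Since persistence is invariant under $\operatorname{GL}$ and under nonzero scaling, $f_0$ and $g_0$ are themselves persistent. Corollary~\ref{cor:persistent-isobaric-product} then shows that $f_0\boxtimes g_0$ is persistent.

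Next, bilinearity of $\boxtimes$ in the pair $(T_f,T_g)$ gives $(\alpha f')\boxtimes(\beta g')=\alpha\beta\,(f'\boxtimes g')$. Combining this with \eqref{Kronecker-equivariance}, we get
\begin{equation*}
f\boxtimes g=\alpha\beta\,(A\boxtimes B)\cdot(f_0\boxtimes g_0).
\end{equation*}
Here $A\boxtimes B$ is invertible with inverse $A^{-1}\boxtimes B^{-1}$, and $\alpha\beta\neq0$. Invariance of persistence under the $\operatorname{GL}$-action and scaling therefore gives persistence of $f\boxtimes g$.

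Part (ii) is the same argument with $n=3$, using the $d=4$ cubic family. That family is isobaric of weight three for all $\lambda_1,\lambda_2,\lambda_3$, so no case split on the $\lambda_i$ is needed.

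The main obstacle is making sure the classification really is exhaustive up to $\operatorname{GL}$ and scaling in each dimension used, including $d=1$ and mixed dimensions such as $d_1=2$, $d_2=4$ in (ii). The argument above treats each factor independently, so mixed dimensions pose no extra difficulty. A secondary point is the equivariance identity \eqref{Kronecker-equivariance}, which is stated before the corollary and can be assumed. If needed, it can be verified on decomposable tuples via \eqref{Kronecker-definition} and extended by Lemma~\ref{lem:Segre-Veronese-determination}.
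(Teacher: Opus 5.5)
Your proposal is correct and follows essentially the same route as the paper's proof. Both use \cite[Theorem~5]{GO25} to reduce to normal forms that are isobaric of the distinguished weight, then apply Corollary~\ref{cor:persistent-isobaric-product}, and finally transport back via \eqref{Kronecker-equivariance} together with invariance of persistence under $\operatorname{GL}$ and nonzero scaling. Your explicit weight checks, the bilinearity remark $(\alpha f')\boxtimes(\beta g')=\alpha\beta\,(f'\boxtimes g')$, and the treatment of $d=1$ are details the paper leaves implicit.
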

\begin{proof}
In the stated ranges, \cite[Theorem~5]{GO25} shows that each persistent tensor is, up to multiplication by a nonzero scalar and an invertible linear change of coordinates, equivalent to a persistent tensor that is isobaric of the distinguished weight. Let $\widetilde f$ and $\widetilde g$ be the corresponding normal-form representatives. By Corollary~\ref{cor:persistent-isobaric-product}, $\widetilde f\boxtimes\widetilde g$ is persistent. Equation~\eqref{Kronecker-equivariance} shows that the Kronecker product of the original tensors is obtained from $\widetilde f\boxtimes\widetilde g$ by an invertible linear change of coordinates and multiplication by a nonzero scalar. Since both operations preserve persistence, the conclusion follows.
\end{proof}
%%%%%%%%%%%%%%%%%%%%%%%%%%%%%%%%%%%%%%%%%%%%%%%%%%%%%%%%%%%%
%%%%%%%%%%%%%%%%%%%%%%%%%%%%%%%%%%%%%%%%%%%%%%%%%%%%%%%%%%%%
\section{Normalized Hessian spaces and triangularizability}
\label{sec:matrix-space}
%%%%%%%%%%%%%%%%%%%%%%%%%%%%%%%%%%%%%%%
\subsection{The cubic case}
The cubic case admits a complementary matrix-theoretic interpretation. Linear spaces consisting entirely of nilpotent matrices have been studied extensively; see, for instance, \cite{MOR91,MORS21}. A fundamental distinction in this theory is that a nilpotent matrix space need not be simultaneously strictly triangularizable. In the present setting, this distinction arises naturally from the normalized Hessian spaces associated with persistent cubic forms.

In this section, we describe a sufficient triangularizability mechanism for the perfect-power Hessian phenomenon. The counterexample of Section~\ref{sec:counterexample} shows that such triangularizability does not hold for every persistent cubic, but the criterion below gives a useful sufficient condition for Kronecker closure.

Let $f\in\Sym^3V$ be persistent, where $d=\dim V$. By Theorem~\ref{thm:hessian}, there exist a nonzero linear form $\ell_f\in V^\vee$ and a scalar $\lambda_f\in\mathbbm{C}^\times$ such that
\begin{equation}
\Hess(f)=\lambda_f\,\ell_f^d.
\end{equation}
Choose $v_0\in V$ satisfying $\ell_f(v_0)=1$. Since $\det\mathcal{H}_f(v_0)=\lambda_f\neq0$, we may define the normalized Hessian map
\begin{equation}\label{normalized-hessian-map}
\mathcal{N}_f(v)=\mathcal{H}_f(v_0)^{-1}\mathcal{H}_f(v)-\ell_f(v)I_V.
\end{equation}
The restriction
\begin{equation}
\mathcal{N}_f(\ker\ell_f)\subseteq\operatorname{End}(V)
\end{equation}
is a linear space consisting entirely of nilpotent endomorphisms. Indeed, if $u\in\ker\ell_f$, then for every $t\in\mathbbm{C}$,
\begin{equation}
\det\big(I_V+t\mathcal{N}_f(u)\big)=
\frac{\det\mathcal{H}_f(v_0+tu)}{\det\mathcal{H}_f(v_0)}=
\frac{\lambda_f\,\ell_f(v_0+tu)^d}{\lambda_f}=1.
\end{equation}
Thus all elementary symmetric functions of the eigenvalues of $\mathcal{N}_f(u)$ vanish. Consequently, $\det(\mu I_V-\mathcal{N}_f(u))=\mu^d$, so the characteristic polynomial of $\mathcal{N}_f(u)$ is $\mu^d$. By the Cayley--Hamilton theorem $\mathcal{N}_f(u)^d=0$, and hence $\mathcal{N}_f(u)$ is nilpotent.

The following result shows how simultaneous strict triangularizability of these normalized Hessian spaces gives a direct matrix-theoretic explanation for the perfect-power Hessian of a Kronecker product.

\begin{proposition}[Triangularizability criterion] \label{prop:cubic-triangularizability}
Let $f\in\Sym^3V$ and $g\in\Sym^3W$ be persistent cubic forms, where $\dim V=d_1$ and $\dim W=d_2$. Let $\ell_f\in V^\vee$ and $\ell_g\in W^\vee$ be nonzero linear forms such that for some $\lambda_f,\lambda_g\in\mathbbm{C}^\times$,
\begin{equation}\label{cubic-factor-hessians}
\Hess(f)=\lambda_f\,\ell_f^{d_1},
\qquad
\Hess(g)=\lambda_g\,\ell_g^{d_2}.
\end{equation}
Choose $v_0\in V$ and $w_0\in W$ satisfying
$\ell_f(v_0)=\ell_g(w_0)=1$, and define
\begin{equation}\label{normalized-hessian-spaces}
\mathcal{N}_f(v)=\mathcal{H}_f(v_0)^{-1}\mathcal{H}_f(v)-\ell_f(v)I_V,
\qquad
\mathcal{N}_g(w)=\mathcal{H}_g(w_0)^{-1}\mathcal{H}_g(w)-\ell_g(w)I_W.
\end{equation}
If the matrix spaces $\mathcal{N}_f(\ker\ell_f)\subseteq\operatorname{End}(V)$ and $\mathcal{N}_g(\ker\ell_g)\subseteq\operatorname{End}(W)$ are simultaneously strictly triangularizable, then $f\boxtimes g$ is persistent.
\end{proposition}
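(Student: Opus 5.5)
The plan is to compute $\Hess(f\boxtimes g)$ globally on $V\otimes W$, not just on the Segre variety. I will show it equals a nonzero constant times $(\ell_f\otimes\ell_g)^{d_1d_2}$, and then invoke Theorem~\ref{thm:hessian}\,(i) (or (ii), since $n=3$). The key point is that for cubics the Hessian matrices have \emph{linear} entries. For linear forms $a\in V^\vee$ and $b\in W^\vee$, the polynomial Kronecker product $a\boxtimes b$ is simply the linear form $a\otimes b$ on $V\otimes W$: by Definition~\ref{def:KroneckerProduct} with $n=1$, $(a\boxtimes b)(v\otimes w)=a(v)b(w)$, and both sides are linear. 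Hence Corollary~\ref{cor:Hessian-matrix-identity} says that $z\mapsto\mathcal{H}_{f\boxtimes g}(z)$ is the \emph{linear} extension of the bilinear map $(v,w)\mapsto\frac16\mathcal{H}_f(v)\boxtimes\mathcal{H}_g(w)$. Concretely, for $z=\sum_k v_k\otimes w_k$ (not necessarily decomposable),
\begin{equation*}
\mathcal{H}_{f\boxtimes g}(z)=\tfrac16\sum_k\mathcal{H}_f(v_k)\boxtimes\mathcal{H}_g(w_k).
\end{equation*}

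Next I normalize. I multiply on the left by the invertible matrix $\mathcal{H}_f(v_0)^{-1}\boxtimes\mathcal{H}_g(w_0)^{-1}$, whose determinant is $\lambda_f^{-d_2}\lambda_g^{-d_1}$, and use the mixed-product rule. Writing $A(v)=\mathcal{H}_f(v_0)^{-1}\mathcal{H}_f(v)=\ell_f(v)I_V+\mathcal{N}_f(v)$ and similarly $B(w)=\ell_g(w)I_W+\mathcal{N}_g(w)$, this gives
\begin{equation*}
\Hess(f\boxtimes g)(z)=6^{-d_1d_2}\lambda_f^{d_2}\lambda_g^{d_1}\det\Big(\sum_k A(v_k)\boxtimes B(w_k)\Big).
\end{equation*}
I then check that $\mathcal{N}_f(v)$ lies in the triangularizable space for \emph{every} $v\in V$, not only for $v\in\ker\ell_f$. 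Indeed, $\mathcal{N}_f(v_0)=I-I=0$, $\mathcal{N}_f$ is linear, and $V=\mathbbm{C}v_0\oplus\ker\ell_f$. Hence $\mathcal{N}_f(v)=\mathcal{N}_f(v-\ell_f(v)v_0)$, and the same holds for $g$.

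Now choose bases (invertible $P$, $Q$) in which all $\mathcal{N}_f(v)$, respectively all $\mathcal{N}_g(w)$, are strictly upper triangular. In the product basis ordered lexicographically, the Kronecker product of two upper triangular matrices is upper triangular, and its diagonal consists of the products of the diagonal entries. Thus each $A(v_k)\boxtimes B(w_k)$ is upper triangular with constant diagonal $\ell_f(v_k)\ell_g(w_k)$. Summing over $k$ gives an upper triangular matrix with constant diagonal $(\ell_f\otimes\ell_g)(z)$. Therefore
\begin{equation*}
\Hess(f\boxtimes g)=6^{-d_1d_2}\lambda_f^{d_2}\lambda_g^{d_1}(\ell_f\otimes\ell_g)^{d_1d_2}.
\end{equation*}
Here $\ell_f\otimes\ell_g$ is a nonzero linear form, and the scalar is nonzero, so Theorem~\ref{thm:hessian}\,(i) yields persistence of $f\boxtimes g$.

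The main obstacle is the first step: passing from the entrywise polynomial Kronecker identity to a statement valid at nondecomposable $z$. This is precisely where the general-degree argument fails, since Lemma~\ref{lem:Kronecker-Hessian-decomposable} only covers decomposable points. In the cubic case it is resolved by the linearity of the entries noted above. I would verify it carefully via Remark~\ref{rem:coordinate-Kronecker} with $n=1$, which makes $a\boxtimes b=\sum_{i,j}a_ib_jz_{ij}$ explicit.
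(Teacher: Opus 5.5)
Your proposal is correct and follows essentially the same route as the paper. Both arguments use the linear dependence of the cubic Hessian matrix on $U=\sum_\alpha v_\alpha\otimes w_\alpha$, factor $\mathcal{H}_{f\boxtimes g}(U)$ through $\mathcal{H}_f(v_0)\boxtimes\mathcal{H}_g(w_0)$, use $\mathcal{N}_f(v)=\mathcal{N}_f(v-\ell_f(v)v_0)$, and read off $\det\mathcal{M}(U)=\ell(U)^{d_1d_2}$ from the constant-diagonal upper-triangular structure. Your explicit observation that $a\boxtimes b=a\otimes b$ for linear forms makes precise the paper's brief appeal to linearity of cubic Hessians at nondecomposable points.
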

\begin{proof}
Since $\ell_f(v_0)=\ell_g(w_0)=1$, equation~\eqref{cubic-factor-hessians} gives $\det\mathcal{H}_f(v_0)=\lambda_f\neq0$ and $\det\mathcal{H}_g(w_0)=\lambda_g\neq0$; hence the inverses in \eqref{normalized-hessian-spaces} are well defined. Since $f$ and $g$ are cubic, Corollary~\ref{cor:Hessian-matrix-identity} gives the global identity
\begin{equation}\label{cubic-global-Hessian}
\mathcal{H}_{f\boxtimes g}=\frac{1}{6}\,
\mathcal{H}_f\boxtimes\mathcal{H}_g.
\end{equation}
Moreover, by \eqref{normalized-hessian-spaces},
\begin{equation}\label{normalized-Hessian-factorization}
\mathcal{H}_f(v)=\mathcal{H}_f(v_0)\big(\ell_f(v)I_V+\mathcal{N}_f(v)\big),
\qquad
\mathcal{H}_g(w)=\mathcal{H}_g(w_0)\big(\ell_g(w)I_W+\mathcal{N}_g(w)\big).
\end{equation}
Let $U\in V\otimes W$ be arbitrary and choose a decomposition $U=\sum_{\alpha=1}^{r}v_\alpha\otimes w_\alpha$. Since the Hessian matrices of cubic forms depend linearly on their
arguments, evaluating \eqref{cubic-global-Hessian} at $U$ gives
\begin{equation}\label{cubic-Hessian-U}
\mathcal{H}_{f\boxtimes g}(U)=\frac{1}{6}\sum_{\alpha=1}^{r}
\mathcal{H}_f(v_\alpha)\boxtimes\mathcal{H}_g(w_\alpha).
\end{equation}
Substituting \eqref{normalized-Hessian-factorization} into
\eqref{cubic-Hessian-U}, we obtain
\begin{equation}\label{cubic-Hessian-factorization}
\mathcal{H}_{f\boxtimes g}(U)=\frac{1}{6}\big(\mathcal{H}_f(v_0)\boxtimes\mathcal{H}_g(w_0)\big)\mathcal{M}(U),
\end{equation}
where
\begin{equation}\label{matrix-MU}
\mathcal{M}(U)=\sum_{\alpha=1}^{r} \big(\ell_f(v_\alpha)I_V+\mathcal{N}_f(v_\alpha)\big)
\boxtimes\big(\ell_g(w_\alpha)I_W+\mathcal{N}_g(w_\alpha)\big).
\end{equation}
Set $\ell=\ell_f\boxtimes\ell_g\in(V\otimes W)^\vee$. Since $\ell(U)=\sum_{\alpha=1}^{r}\ell_f(v_\alpha)\ell_g(w_\alpha)$, the scalar part of $\mathcal{M}(U)$ is $\ell(U)I_{V\otimes W}$. Now let $v\in V$. Since $\ell_f(v_0)=1$, we may write $v=\ell_f(v)v_0+u$, where $u\in\ker\ell_f$. The linearity of $\mathcal{N}_f$, together with $\mathcal{N}_f(v_0)=0$, gives $\mathcal{N}_f(v)=\mathcal{N}_f(u)$. Thus, in a basis simultaneously strictly triangularizing $\mathcal{N}_f(\ker\ell_f)$, every $\mathcal{N}_f(v)$ is strictly upper triangular. The analogous statement holds for every $\mathcal{N}_g(w)$. Expanding each summand in \eqref{matrix-MU} gives
\begin{align*}
\big(\ell_f(v_\alpha)I_V+\mathcal{N}_f(v_\alpha)\big)
\!\boxtimes\!\big(\ell_g(w_\alpha)I_W+\mathcal{N}_g(w_\alpha)\big)\!&=
\ell_f(v_\alpha)\ell_g(w_\alpha)I_{V\otimes W}
+\ell_g(w_\alpha)\mathcal{N}_f(v_\alpha)\boxtimes I_W \\
&\,\,\,\,+\ell_f(v_\alpha)\,I_V\boxtimes \mathcal{N}_g(w_\alpha)
+\mathcal{N}_f(v_\alpha)\boxtimes \mathcal{N}_g(w_\alpha).
\end{align*}
With respect to the induced lexicographically ordered basis of
$V\otimes W$, the last three terms are strictly upper triangular.
Consequently, $\mathcal{M}(U)=\ell(U)I_{V\otimes W}+R(U)$ where $R(U)$ is strictly upper triangular. Hence
\begin{equation}\label{det-MU}
\det\mathcal{M}(U)=\ell(U)^{d_1d_2}.
\end{equation}
Taking determinants in \eqref{cubic-Hessian-factorization}, using $\det\mathcal{H}_f(v_0)=\lambda_f$ and $\det\mathcal{H}_g(w_0)=\lambda_g$, and applying \eqref{det-MU}, we obtain
\begin{align*}
\Hess(f\boxtimes g)(U)
&=\Big(\frac{1}{6}\Big)^{\!d_1d_2}
\det\big(\mathcal{H}_f(v_0)\boxtimes\mathcal{H}_g(w_0)\big)
\det\mathcal{M}(U)\\
&=\Big(\frac{1}{6}\Big)^{\!d_1d_2}
\lambda_f^{d_2}\lambda_g^{d_1}\ell(U)^{d_1d_2}.
\end{align*}
The coefficient is nonzero, so the Hessian of $f\boxtimes g$ is a nonzero $d_1d_2$-th power of a linear form. Hence $f\boxtimes g$ is persistent.
\end{proof}

The preceding argument can in fact be made one-sided: simultaneous strict triangularizability of the normalized Hessian space of only one factor is sufficient.

\begin{corollary}[One-sided triangularizability criterion]
\label{cor:one-sided-triangularizability}
Under the hypotheses and notation of Proposition~\ref{prop:cubic-triangularizability}, it is sufficient to assume that at least one of the matrix spaces $\mathcal{N}_f(\ker\ell_f)\subseteq\operatorname{End}(V)$ and $\mathcal{N}_g(\ker\ell_g)\subseteq\operatorname{End}(W)$ is simultaneously strictly triangularizable. Then $f\boxtimes g$ is persistent.
\end{corollary}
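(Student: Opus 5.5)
The plan is to reuse the proof of Proposition~\ref{prop:cubic-triangularizability} verbatim up to the factorization \eqref{cubic-Hessian-factorization}. Only the computation of $\det\mathcal{M}(U)$ changes: instead of a fully triangular matrix we will get a block upper triangular one. By the symmetry of the Kronecker product, we may assume that $\mathcal{N}_f(\ker\ell_f)$ is simultaneously strictly triangularizable. If instead only $\mathcal{N}_g(\ker\ell_g)$ is, we swap the roles of $V$ and $W$, ordering the product basis with the $W$-index major; this is a permutation similarity and does not change determinants. No assumption is placed on $\mathcal{N}_g(\ker\ell_g)$ beyond what persistence already gives: by the computation preceding Proposition~\ref{prop:cubic-triangularizability}, every $\mathcal{N}_g(u)$ with $u\in\ker\ell_g$ is nilpotent.

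First fix a basis of $V$ in which every $\mathcal{N}_f(u)$, $u\in\ker\ell_f$, is strictly upper triangular. As in the proof of the proposition, $\mathcal{N}_f(v)=\mathcal{N}_f(v-\ell_f(v)v_0)$, so every $\mathcal{N}_f(v)$ is then strictly upper triangular. Order the basis of $V\otimes W$ lexicographically with the $V$-index major, and view $\mathcal{M}(U)$ from \eqref{matrix-MU} as a $d_1\times d_1$ block matrix with $d_2\times d_2$ blocks. Its $(i,k)$ block is
\begin{equation*}
\mathcal{M}(U)_{ik}=\sum_{\alpha=1}^{r}\big(\ell_f(v_\alpha)\delta_{ik}+\mathcal{N}_f(v_\alpha)_{ik}\big)\big(\ell_g(w_\alpha)I_W+\mathcal{N}_g(w_\alpha)\big).
\end{equation*}
Since $\mathcal{N}_f(v_\alpha)_{ik}=0$ for $i\geq k$, all blocks with $i>k$ vanish. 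Hence $\mathcal{M}(U)$ is block upper triangular, and $\det\mathcal{M}(U)$ is the product of the determinants of its diagonal blocks.

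Next compute a diagonal block. Put $w'=\sum_\alpha\ell_f(v_\alpha)w_\alpha=(\ell_f\otimes\mathrm{id}_W)(U)$. By linearity of $\ell_g$ and $\mathcal{N}_g$, every diagonal block equals $\ell_g(w')I_W+\mathcal{N}_g(w')$. Writing $w'=\ell_g(w')w_0+u$ with $u\in\ker\ell_g$ gives $\mathcal{N}_g(w')=\mathcal{N}_g(u)$, which is nilpotent. Therefore
\begin{equation*}
\det\big(\ell_g(w')I_W+\mathcal{N}_g(w')\big)=\ell_g(w')^{d_2},
\end{equation*}
and moreover $\ell_g(w')=\sum_\alpha\ell_f(v_\alpha)\ell_g(w_\alpha)=\ell(U)$. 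Multiplying the $d_1$ identical diagonal determinants yields $\det\mathcal{M}(U)=\ell(U)^{d_1d_2}$, which is exactly \eqref{det-MU}.

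The remainder of the argument is identical to the end of the proof of Proposition~\ref{prop:cubic-triangularizability}. We obtain $\Hess(f\boxtimes g)=6^{-d_1d_2}\lambda_f^{d_2}\lambda_g^{d_1}\ell^{d_1d_2}$ with a nonzero coefficient, and Theorem~\ref{thm:hessian}(ii) gives persistence. The only step needing care is the block-triangularity. The point is that the non-triangularizable factor $g$ enters only through diagonal blocks, and these are each a single nilpotent perturbation $\mathcal{N}_g(w')$ of a scalar matrix rather than a sum of non-commuting nilpotents. This is what makes the one-sided hypothesis suffice.
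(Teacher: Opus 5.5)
Your proof is correct and is essentially the paper's argument with the roles of $f$ and $g$ interchanged. The paper assumes $\mathcal{N}_g(\ker\ell_g)$ is triangularizable and obtains $d_2$ identical diagonal blocks $\ell_f(v_U)I_V+\mathcal{N}_f(v_U)$, with $v_U=\sum_\alpha\ell_g(w_\alpha)v_\alpha$. It evaluates their determinant directly from $\Hess(f)=\lambda_f\ell_f^{d_1}$ rather than via nilpotency, which amounts to the same thing. Your explicit choice of the $V$-major ordering, together with the permutation-similarity remark for the other case, is slightly more careful than the paper's appeal to ``the induced product basis''.
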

\begin{proof}
By symmetry, assume that $\mathcal{N}_g(\ker\ell_g)$ is simultaneously strictly triangularizable. Let $U\in V\otimes W$ be arbitrary and choose a decomposition $U=\sum_{\alpha=1}^{r}v_\alpha\otimes w_\alpha$. As in the proof of Proposition~\ref{prop:cubic-triangularizability},
\begin{equation*}
\mathcal{H}_{f\boxtimes g}(U)=\frac{1}{6}\big(\mathcal{H}_f(v_0) \boxtimes \mathcal{H}_g(w_0)\big)\mathcal{M}(U),
\end{equation*}
where
\begin{equation*}
\mathcal{M}(U)=\sum_{\alpha=1}^{r}
\big(\ell_f(v_\alpha)I_V+\mathcal{N}_f(v_\alpha)\big)\boxtimes\big(\ell_g(w_\alpha)I_W+\mathcal{N}_g(w_\alpha)\big).
\end{equation*}
Since $\mathcal{N}_g(w_0)=0$, every $w\in W$ can be written as $w=\ell_g(w)w_0+u$ with $u\in\ker\ell_g$, and hence $\mathcal{N}_g(w)=\mathcal{N}_g(u)$. Thus there exists a basis of $W$ in which every $\mathcal{N}_g(w)$ is strictly upper triangular. In this basis, each endomorphism $\ell_g(w)I_W+\mathcal{N}_g(w)$ is upper triangular with all diagonal entries equal to $\ell_g(w)$. Consequently, with respect to the induced product basis of $V\otimes W$, the endomorphism $\mathcal{M}(U)$ is block upper triangular with $d_2$ identical diagonal blocks
\begin{equation*}
\mathcal{B}(U)=\sum_{\alpha=1}^{r}\ell_g(w_\alpha)
\big(\ell_f(v_\alpha)I_V+\mathcal{N}_f(v_\alpha)\big).
\end{equation*}
Set $v_U=\sum_{\alpha=1}^{r}\ell_g(w_\alpha)v_\alpha$.  By the linearity of $\ell_f$ and $\mathcal{N}_f$,
\begin{equation*}
\mathcal{B}(U)=\ell_f(v_U)I_V+\mathcal{N}_f(v_U)=\mathcal{H}_f(v_0)^{-1}\mathcal{H}_f(v_U).
\end{equation*}
Therefore,
\begin{equation*}
\det\mathcal{B}(U)=\frac{\det\mathcal{H}_f(v_U)}{\det\mathcal{H}_f(v_0)}=\frac{\lambda_f\,\ell_f(v_U)^{d_1}}{\lambda_f}=\ell_f(v_U)^{d_1}.
\end{equation*}
Since $\mathcal{M}(U)$ has $d_2$ identical diagonal blocks,
\begin{equation*}
\det\mathcal{M}(U)=\det\mathcal{B}(U)^{d_2}=\ell_f(v_U)^{d_1d_2}.
\end{equation*}
Setting $\ell=\ell_f\boxtimes\ell_g\in(V\otimes W)^\vee$, one has
\begin{equation*}
\ell_f(v_U)=\sum_{\alpha=1}^{r}\ell_f(v_\alpha)\ell_g(w_\alpha) =\ell(U).
\end{equation*}
Taking determinants in the factorization of
$\mathcal{H}_{f\boxtimes g}(U)$ gives
\begin{equation*}
\Hess(f\boxtimes g)(U)=\Big(\frac{1}{6}\Big)^{\!d_1d_2} \lambda_f^{d_2}\lambda_g^{d_1}\ell(U)^{d_1d_2},
\end{equation*}
and hence $f\boxtimes g$ is persistent.
\end{proof}

\begin{remark}\label{rem:cubic-Hessian-insufficient}
For persistent cubic forms, the perfect-power identities $\Hess(f)=\lambda_f\ell_f^{d_1}$ and $\Hess(g)=\lambda_g\ell_g^{d_2}$ determine the expected Hessian factorization of $f\boxtimes g$ on the Segre variety, but do not control the determinant at nondecomposable tensors, where the Hessian matrix is a sum of Kronecker products. The triangularizability hypotheses in Proposition~\ref{prop:cubic-triangularizability} and Corollary~\ref{cor:one-sided-triangularizability} provide the additional matrix-theoretic structure needed to obtain a global perfect-power identity. As Section~\ref{sec:counterexample} shows, such additional structure cannot be omitted in general.
\end{remark}

%%%%%%%%%%%%%%%%%%%%%%%%%%%%%%%%%%%%%%%
\subsection{Extension to arbitrary degree}
The preceding construction admits a natural extension to symmetric persistent forms of arbitrary degree. For cubic forms, the Hessian matrix depends linearly on a single vector. In higher degree, the corresponding object is the Hessian matrix of the $(n-2)$-fold partial polarization.

Let $f\in\Sym^nV$, where $n\geq3$, and put $r=n-2$. For $\mathbf{v}=(v^{(1)},\ldots,v^{(r)})\in V^{\times r}$, write $f_{\mathbf{v}}\coloneqq f_{v^{(1)},\ldots,v^{(r)}}$ and define the \emph{polarized Hessian matrix}
\begin{equation}\label{polarized-Hessian-map}
\mathcal{H}^{\mathrm{pol}}_f(\mathbf{v})
\coloneqq\mathcal{H}_{f_{\mathbf{v}}}.
\end{equation}
Thus $\mathcal{H}^{\mathrm{pol}}_f$ is a symmetric multilinear map from $V^{\times r}$ to the space of symmetric bilinear forms on $V$. By \eqref{partial-polarization} and the restitution convention \eqref{restitution-convention}, one has
\begin{equation}\label{polarized-Hessian-Tf}
\mathcal{H}^{\mathrm{pol}}_f(v^{(1)},\ldots,v^{(r)})(a,c)
=n!\,T_f(v^{(1)},\ldots,v^{(r)},a,c)
\end{equation}
for every $a,c\in V$.

Suppose that $f$ is persistent and let $d=\dim V$. By Theorem~\ref{thm:hessian}, there exists a nonzero multihomogeneous polynomial $P_f$ of multidegree $(1,\ldots,1)$ such that
\begin{equation}\label{polarized-Hessian-perfect-power}
\Hess(f_{\mathbf{v}})\coloneqq\det\mathcal{H}^{\mathrm{pol}}_f(\mathbf{v})=P_f(\mathbf{v})^d.
\end{equation}
Since $P_f$ has degree one in each of its $r$ vector arguments, it may equivalently be regarded as an $r$-linear form on $V^{\times r}$.

Choose $\mathbf{v}_0\in V^{\times r}$ such that $P_f(\mathbf{v}_0)=1$. Then $\mathcal{H}^{\mathrm{pol}}_f(\mathbf{v}_0)$ is invertible, and we define the \emph{normalized polarized Hessian map}
\begin{equation}\label{normalized-polarized-Hessian}
\mathcal{N}^{\mathrm{pol}}_f(\mathbf{v})\coloneqq
\mathcal{H}^{\mathrm{pol}}_f(\mathbf{v}_0)^{-1}
\mathcal{H}^{\mathrm{pol}}_f(\mathbf{v})
-P_f(\mathbf{v})I_V.
\end{equation}
The map $\mathcal{N}^{\mathrm{pol}}_f:V^{\times r}\to\operatorname{End}(V)$ is multilinear. We denote by
\begin{equation}\label{normalized-polarized-Hessian-space}
\mathscr{N}^{\mathrm{pol}}_f\coloneqq\operatorname{Span}\big\{\mathcal{N}^{\mathrm{pol}}_f(\mathbf{v})\mid\mathbf{v}\in V^{\times r}\big\}\subseteq\operatorname{End}(V)
\end{equation}
the associated normalized polarized Hessian space.

The following identity is the higher-degree analogue of \eqref{cubic-global-Hessian}.

\begin{lemma}[Polarized Hessian identity]
\label{lem:polarized-Hessian-Kronecker}
Let $f\in\Sym^nV$ and $g\in\Sym^nW$, where $n\geq3$, and put $r=n-2$. Then, for arbitrary $v^{(j)}\in V$ and $w^{(j)}\in W$, $1\leq j\leq r$,
\begin{equation}\label{polarized-Hessian-Kronecker}
\mathcal{H}^{\mathrm{pol}}_{f\boxtimes g}
\big(v^{(1)}\otimes w^{(1)},\ldots,
v^{(r)}\otimes w^{(r)}\big)=\frac{1}{n!}\,
\mathcal{H}^{\mathrm{pol}}_f(v^{(1)},\ldots,v^{(r)})
\boxtimes
\mathcal{H}^{\mathrm{pol}}_g(w^{(1)},\ldots,w^{(r)}).
\end{equation}
\end{lemma}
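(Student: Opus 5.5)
The identity is linear in each side, so I would reduce it to a comparison of multilinear forms and then evaluate on decomposable arguments. By \eqref{polarized-Hessian-Tf}, applied to $f\boxtimes g$ on $V\otimes W$, the left-hand side of \eqref{polarized-Hessian-Kronecker} is the bilinear form on $V\otimes W$ given by
\begin{equation*}
(A,C)\longmapsto n!\,T_{f\boxtimes g}\big(v^{(1)}\otimes w^{(1)},\ldots,v^{(r)}\otimes w^{(r)},A,C\big).
\end{equation*}
Both sides are bilinear forms on $V\otimes W$. It therefore suffices to check equality on pairs of decomposable arguments $A=a\otimes b$ and $C=c\otimes e$ with $a,c\in V$ and $b,e\in W$. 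Decomposable tensors span $V\otimes W$; equivalently, one can invoke Lemma~\ref{lem:Segre-Veronese-determination} with two factors and $n_k=m_k=1$.

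On such arguments, Definition~\ref{def:KroneckerProduct} (equation \eqref{Kronecker-definition}) turns the left-hand side into
\begin{equation*}
n!\,T_f\big(v^{(1)},\ldots,v^{(r)},a,c\big)\,T_g\big(w^{(1)},\ldots,w^{(r)},b,e\big).
\end{equation*}
On the right-hand side I use the convention from the Notation paragraph: the Kronecker product of bilinear forms evaluates on $(a\otimes b,\,c\otimes e)$ as the product of the values on $(a,c)$ and on $(b,e)$. Applying \eqref{polarized-Hessian-Tf} to $f$ and to $g$, the right-hand side becomes
\begin{equation*}
\frac{1}{n!}\,\big(n!\,T_f(v^{(1)},\ldots,v^{(r)},a,c)\big)\big(n!\,T_g(w^{(1)},\ldots,w^{(r)},b,e)\big).
\end{equation*}
This equals the expression above, so the two forms agree on decomposable pairs, and the identity follows.

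The only genuinely delicate point is normalization. One must confirm that \eqref{polarized-Hessian-Tf} holds with the constant $n!$. This comes from \eqref{partial-polarization} with $n-2$ polarizations, which gives $\frac{n!}{2!}T_f(v^{(1)},\ldots,v^{(r)},x,x)$, followed by the factor $2$ from taking the Hessian of a quadratic form. One must also confirm that the same formula applies to $f\boxtimes g$ in dimension $d_1d_2$, which it does, since it is a general statement for any symmetric tensor.

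As a cross-check, I would compare with the cubic case $n=3$, $r=1$, where the factor $1/6$ should reproduce \eqref{Hessian-Kronecker} evaluated at $v\otimes w$. Alternatively, the result can be derived entrywise by iterating Proposition~\ref{prop:Kronecker-differentiation} with $r=n-2$ directional derivatives along $v^{(j)}\otimes w^{(j)}$ and then taking two more derivatives. This route gives the constant $\frac{(n-2)!}{n!}\cdot\frac{1}{2}\cdot$ adjustments, and it should agree with the $1/n!$ obtained from the multilinear-form argument.
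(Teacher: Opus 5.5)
Your proof is correct and is essentially the paper's argument: evaluate both bilinear forms on decomposable pairs $(a\otimes b,\,c\otimes e)$, use \eqref{polarized-Hessian-Tf} to extract the common factor $n!$, reduce to \eqref{Kronecker-definition}, and extend by Lemma~\ref{lem:Segre-Veronese-determination}. One small slip in your optional alternative route: iterating Proposition~\ref{prop:Kronecker-differentiation} with $n-2$ derivatives gives the factor $\frac{2!}{n!}$ (not $\frac{(n-2)!}{n!}$), and the Hessian of a Kronecker product of two quadrics contributes exactly $\frac12$, so that route also yields $\frac{1}{n!}$ with no further adjustments.
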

\begin{proof}
Let $a,c\in V$ and $b,d\in W$. By \eqref{polarized-Hessian-Tf}, both sides of \eqref{polarized-Hessian-Kronecker}, when evaluated on $a\otimes b$ and $c\otimes d$, contain the common nonzero factor $n!$. Dividing by this factor, it is enough to verify that
\begin{equation*}
T_{f\boxtimes g}\big(v^{(1)}\otimes w^{(1)},\ldots,
v^{(r)}\otimes w^{(r)},a\otimes b,c\otimes d\big)
=T_f(v^{(1)},\ldots,v^{(r)},a,c)\,
T_g(w^{(1)},\ldots,w^{(r)},b,d),
\end{equation*}
which follows directly from Definition~\ref{def:KroneckerProduct}. Thus \eqref{polarized-Hessian-Kronecker} holds on pairs of decomposable vectors. By Lemma~\ref{lem:Segre-Veronese-determination}, the two bilinear forms therefore agree identically on $V\otimes W$, and hence \eqref{polarized-Hessian-Kronecker} follows.
\end{proof}

For multilinear forms $P_f:V^{\times r}\to\mathbbm{C}$ and $P_g:W^{\times r}\to\mathbbm{C}$, let
\begin{equation}
P_f\boxtimes P_g:(V\otimes W)^{\times r} \longrightarrow\mathbbm{C}
\end{equation}
denote the multilinear form determined by
\begin{equation}\label{PfPg-Kronecker}
(P_f\boxtimes P_g)
\big(v^{(1)}\otimes w^{(1)},\ldots,
v^{(r)}\otimes w^{(r)}\big)=
P_f(v^{(1)},\ldots,v^{(r)})P_g(w^{(1)},\ldots,w^{(r)}).
\end{equation}

\begin{theorem}[Triangularizability criterion in arbitrary degree]
\label{thm:general-triangularizability}
Let $f\in\Sym^nV$ and $g\in\Sym^nW$ be persistent, where $n\geq3$, $\dim V=d_1$, and $\dim W=d_2$. Let $P_f$ and $P_g$ be the multilinear forms determined by
\begin{equation}
\Hess(f_{\mathbf{v}})=P_f(\mathbf{v})^{d_1},
\qquad
\Hess(g_{\mathbf{w}})=P_g(\mathbf{w})^{d_2},
\end{equation}
and let $\mathscr{N}^{\mathrm{pol}}_f\subseteq\operatorname{End}(V)$ and $\mathscr{N}^{\mathrm{pol}}_g\subseteq\operatorname{End}(W)$ be the normalized polarized Hessian spaces defined by \eqref{normalized-polarized-Hessian}--\eqref{normalized-polarized-Hessian-space}. If both $\mathscr{N}^{\mathrm{pol}}_f$ and $\mathscr{N}^{\mathrm{pol}}_g$ are simultaneously strictly triangularizable, then $f\boxtimes g$ is persistent.
\end{theorem}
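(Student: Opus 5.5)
The plan is to mirror the proof of Proposition~\ref{prop:cubic-triangularizability}, replacing the Hessian of a cubic by the polarized Hessian $\mathcal{H}^{\mathrm{pol}}$, and then to conclude with Theorem~\ref{thm:hessian}\,(iii). By that characterization it suffices to produce a nonzero multilinear form $P$ on $(V\otimes W)^{\times r}$ with
\begin{equation*}
\det\mathcal{H}^{\mathrm{pol}}_{f\boxtimes g}(U^{(1)},\ldots,U^{(r)})=P(U^{(1)},\ldots,U^{(r)})^{d_1d_2}
\end{equation*}
for all $U^{(j)}\in V\otimes W$. The candidate is $P=c\,(P_f\boxtimes P_g)$ for a suitable constant $c$; this form is nonzero because $P_f(\mathbf{v}_0)P_g(\mathbf{w}_0)=1$.

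First, fix $\mathbf{v}_0$ and $\mathbf{w}_0$ with $P_f(\mathbf{v}_0)=P_g(\mathbf{w}_0)=1$. By \eqref{normalized-polarized-Hessian} we can write
\begin{equation*}
\mathcal{H}^{\mathrm{pol}}_f(\mathbf{v})=\mathcal{H}^{\mathrm{pol}}_f(\mathbf{v}_0)\big(P_f(\mathbf{v})I_V+\mathcal{N}^{\mathrm{pol}}_f(\mathbf{v})\big),
\end{equation*}
and similarly for $g$. Next, given arbitrary $U^{(j)}$, we expand each one as $U^{(j)}=\sum_{\alpha}v^{(j)}_\alpha\otimes w^{(j)}_\alpha$. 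Multilinearity of $\mathcal{H}^{\mathrm{pol}}_{f\boxtimes g}$ together with Lemma~\ref{lem:polarized-Hessian-Kronecker} expresses $\mathcal{H}^{\mathrm{pol}}_{f\boxtimes g}(U^{(1)},\ldots,U^{(r)})$ as a sum, over multi-indices $(\alpha_1,\ldots,\alpha_r)$, of terms
\begin{equation*}
\frac{1}{n!}\,\mathcal{H}^{\mathrm{pol}}_f(v^{(1)}_{\alpha_1},\ldots)\boxtimes\mathcal{H}^{\mathrm{pol}}_g(w^{(1)}_{\alpha_1},\ldots).
\end{equation*}
Factoring out the constant matrix $\frac{1}{n!}\mathcal{H}^{\mathrm{pol}}_f(\mathbf{v}_0)\boxtimes\mathcal{H}^{\mathrm{pol}}_g(\mathbf{w}_0)$ on the left leaves
\begin{equation*}
\mathcal{M}=\sum_{\alpha}\big(P_f(\mathbf{v}_\alpha)I_V+\mathcal{N}^{\mathrm{pol}}_f(\mathbf{v}_\alpha)\big)\boxtimes\big(P_g(\mathbf{w}_\alpha)I_W+\mathcal{N}^{\mathrm{pol}}_g(\mathbf{w}_\alpha)\big).
\end{equation*}
Expanding each summand, the scalar part is $\sum_\alpha P_f(\mathbf{v}_\alpha)P_g(\mathbf{w}_\alpha)\,I=(P_f\boxtimes P_g)(U^{(1)},\ldots,U^{(r)})\,I$, by multilinearity and \eqref{PfPg-Kronecker}. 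The remaining three types of terms lie in $\mathscr{N}^{\mathrm{pol}}_f\boxtimes I_W$, $I_V\boxtimes\mathscr{N}^{\mathrm{pol}}_g$ and $\mathscr{N}^{\mathrm{pol}}_f\boxtimes\mathscr{N}^{\mathrm{pol}}_g$. Choose bases in which all of $\mathscr{N}^{\mathrm{pol}}_f$ and all of $\mathscr{N}^{\mathrm{pol}}_g$ are strictly upper triangular; this is where the hypothesis on the full span is used, not just on a kernel. In the lexicographic product basis these three types of terms are then strictly upper triangular. Hence
\begin{equation*}
\det\mathcal{M}=(P_f\boxtimes P_g)(U^{(1)},\ldots,U^{(r)})^{d_1d_2}.
\end{equation*}

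Taking determinants then gives
\begin{equation*}
\det\mathcal{H}^{\mathrm{pol}}_{f\boxtimes g}=(n!)^{-d_1d_2}\,\big(\det\mathcal{H}^{\mathrm{pol}}_f(\mathbf{v}_0)\big)^{d_2}\big(\det\mathcal{H}^{\mathrm{pol}}_g(\mathbf{w}_0)\big)^{d_1}\,(P_f\boxtimes P_g)^{d_1d_2},
\end{equation*}
and both determinants in front equal $1$. Absorbing the constant $(n!)^{-d_1d_2}$ by a $d_1d_2$-th root into $P$ yields the required perfect power, and Theorem~\ref{thm:hessian}\,(iii) gives persistence.

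The main obstacle I expect is well-definedness. The sum over multi-indices depends on the chosen decompositions of the $U^{(j)}$, and one must check that the expression is genuinely multilinear and not just multilinear in the pieces. To handle this I would verify the factorization only on tuples of decomposable vectors and then extend by multilinearity, since both sides are multilinear in $(U^{(1)},\ldots,U^{(r)})$; Lemma~\ref{lem:Segre-Veronese-determination} justifies this reduction. A related point is that $\mathcal{N}^{\mathrm{pol}}_f$ is multilinear, not linear, so the triangularity must be applied to the span $\mathscr{N}^{\mathrm{pol}}_f$, which is exactly what the hypothesis provides.
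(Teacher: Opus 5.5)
Your proposal is correct and follows the paper's proof essentially step for step: the same normalization $P_f(\mathbf{v}_0)=P_g(\mathbf{w}_0)=1$, the same factorization through $\mathcal{M}$ via Lemma~\ref{lem:polarized-Hessian-Kronecker}, the same strict triangularity of the three non-scalar Kronecker terms in the lexicographic basis, and the same concluding appeal to Theorem~\ref{thm:hessian}. The well-definedness issue you flag is harmless and needs no separate reduction: the paper simply fixes one decomposition of each $U^{(j)}$, and for any such fixed choice the identities hold directly by multilinearity of $\mathcal{H}^{\mathrm{pol}}_{f\boxtimes g}$ and of $P_f\boxtimes P_g$.
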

\begin{proof}
Put $r=n-2$. Choose $\mathbf{v}_0\in V^{\times r}$ and $\mathbf{w}_0\in W^{\times r}$ such that $P_f(\mathbf{v}_0)=P_g(\mathbf{w}_0)=1$. By definition,
\begin{align}\label{general-normalized-factorization-1}
\mathcal{H}^{\mathrm{pol}}_f(\mathbf{v})&=
\mathcal{H}^{\mathrm{pol}}_f(\mathbf{v}_0)\big(P_f(\mathbf{v})I_V+\mathcal{N}^{\mathrm{pol}}_f(\mathbf{v})\big), \\ \label{general-normalized-factorization-2}
\mathcal{H}^{\mathrm{pol}}_g(\mathbf{w})&=\mathcal{H}^{\mathrm{pol}}_g(\mathbf{w}_0)\big(P_g(\mathbf{w})I_W+\mathcal{N}^{\mathrm{pol}}_g(\mathbf{w})\big).
\end{align}
Let $U^{(1)},\ldots,U^{(r)}\in V\otimes W$ be arbitrary. For each $j=1,\ldots,r$, choose a decomposition
\begin{equation*}
U^{(j)}=\sum_{\alpha_j}v_{\alpha_j}^{(j)}\otimes w_{\alpha_j}^{(j)}.
\end{equation*}
Since the polarized Hessian is multilinear in its $r$ arguments, Lemma~\ref{lem:polarized-Hessian-Kronecker} gives
\begin{equation}\label{general-polarized-Hessian-U}
\mathcal{H}^{\mathrm{pol}}_{f\boxtimes g}
(U^{(1)},\ldots,U^{(r)})=\frac{1}{n!}
\sum_{\alpha_1,\ldots,\alpha_r}
\mathcal{H}^{\mathrm{pol}}_f(v_{\alpha_1}^{(1)},\ldots,v_{\alpha_r}^{(r)})\boxtimes\mathcal{H}^{\mathrm{pol}}_g
(w_{\alpha_1}^{(1)},\ldots,w_{\alpha_r}^{(r)}).
\end{equation}
For brevity, put $\mathbf{v}_{\boldsymbol{\alpha}}=(v_{\alpha_1}^{(1)},\ldots,v_{\alpha_r}^{(r)})$ and $\mathbf{w}_{\boldsymbol{\alpha}}=(w_{\alpha_1}^{(1)},\ldots,w_{\alpha_r}^{(r)})$. Substituting \eqref{general-normalized-factorization-1} and \eqref{general-normalized-factorization-2} into \eqref{general-polarized-Hessian-U}, we obtain
\begin{equation}\label{general-polarized-Hessian-factorization}
\mathcal{H}^{\mathrm{pol}}_{f\boxtimes g}
(U^{(1)},\ldots,U^{(r)})=\frac{1}{n!}
\big(\mathcal{H}^{\mathrm{pol}}_f(\mathbf{v}_0)\boxtimes \mathcal{H}^{\mathrm{pol}}_g(\mathbf{w}_0)\big)\,\mathcal{M}(U^{(1)},\ldots,U^{(r)}),
\end{equation}
where
\begin{equation}\label{general-M-matrix}
\mathcal{M}(U^{(1)},\ldots,U^{(r)})=
\sum_{\alpha_1,\ldots,\alpha_r}
\big(P_f(\mathbf{v}_{\boldsymbol{\alpha}})I_V+
\mathcal{N}^{\mathrm{pol}}_f(\mathbf{v}_{\boldsymbol{\alpha}})\big) \boxtimes
\big(P_g(\mathbf{w}_{\boldsymbol{\alpha}})I_W+
\mathcal{N}^{\mathrm{pol}}_g(\mathbf{w}_{\boldsymbol{\alpha}})\big).
\end{equation}
By hypothesis, choose bases of $V$ and $W$ simultaneously strictly triangularizing $\mathscr{N}^{\mathrm{pol}}_f$ and $\mathscr{N}^{\mathrm{pol}}_g$, respectively. Hence every $\mathcal{N}^{\mathrm{pol}}_f(\mathbf{v})$ and every $\mathcal{N}^{\mathrm{pol}}_g(\mathbf{w})$ is strictly upper triangular in these bases. Expanding a summand of \eqref{general-M-matrix}, the nonscalar terms are
\begin{equation*}
P_g(\mathbf{w})\,\mathcal{N}^{\mathrm{pol}}_f(\mathbf{v})
\boxtimes I_W,
\qquad
P_f(\mathbf{v})\,I_V\boxtimes
\mathcal{N}^{\mathrm{pol}}_g(\mathbf{w}),
\qquad
\mathcal{N}^{\mathrm{pol}}_f(\mathbf{v})\boxtimes
\mathcal{N}^{\mathrm{pol}}_g(\mathbf{w}).
\end{equation*}
With respect to the induced lexicographically ordered basis of $V\otimes W$, each of these matrices is strictly upper triangular. Consequently,
\begin{equation}\label{general-M-upper}
\mathcal{M}(U^{(1)},\ldots,U^{(r)})=
L(U^{(1)},\ldots,U^{(r)})I_{V\otimes W}+R,
\end{equation}
where $R$ is strictly upper triangular and
\begin{equation*}
L(U^{(1)},\ldots,U^{(r)})=\sum_{\alpha_1,\ldots,\alpha_r}P_f(\mathbf{v}_{\boldsymbol{\alpha}})P_g(\mathbf{w}_{\boldsymbol{\alpha}})=(P_f\boxtimes P_g)(U^{(1)},\ldots,U^{(r)}).
\end{equation*}
Therefore
\begin{equation}\label{general-det-M}
\det\mathcal{M}(U^{(1)},\ldots,U^{(r)})=
(P_f\boxtimes P_g)(U^{(1)},\ldots,U^{(r)})^{d_1d_2}.
\end{equation}
Moreover,
\begin{equation*}
\Hess(f_{\mathbf{v_0}})=P_f(\mathbf{v}_0)^{d_1}=1,
\qquad
\Hess(g_{\mathbf{w_0}})=P_g(\mathbf{w}_0)^{d_2}=1.
\end{equation*}
Taking determinants in \eqref{general-polarized-Hessian-factorization} and using \eqref{general-det-M}, we obtain
\begin{equation}\label{general-perfect-power}
\det\mathcal{H}^{\mathrm{pol}}_{f\boxtimes g}
(U^{(1)},\ldots,U^{(r)})=
\left(\frac{1}{n!}\right)^{\!d_1d_2}
(P_f\boxtimes P_g)(U^{(1)},\ldots,U^{(r)})^{d_1d_2}.
\end{equation}
Since $P_f$ and $P_g$ are nonzero, so is $P_f\boxtimes P_g$. Hence the right-hand side of \eqref{general-perfect-power} is the $d_1d_2$-th power of a nonzero multihomogeneous polynomial of multidegree $(1,\ldots,1)$. By Theorem~\ref{thm:hessian}, it follows that $f\boxtimes g$ is persistent.
\end{proof}

\begin{remark}
For $n=3$, one has $r=1$, and the multihomogeneous polynomials $P_f$ and $P_g$ are linear forms. After the normalization $P_f=\ell_f$ and $P_g=\ell_g$, the construction above reduces to the normalized Hessian construction of Proposition~\ref{prop:cubic-triangularizability}. Thus Theorem~\ref{thm:general-triangularizability} is the arbitrary-degree analogue of the two-sided cubic triangularizability criterion.
\end{remark}

\begin{proposition}[Triangularizability of the distinguished isobaric class]
\label{prop:isobaric-triangularizable}
Let $f\in\Sym^n\mathbbm{C}^d$, with $n\geq3$, be isobaric of weight $d-1$ and suppose that $\Hess(f)\not\equiv0$. Then $f$ is persistent. Moreover, the reference tuple $\mathbf{v}_0$ in \eqref{normalized-polarized-Hessian} can be chosen so that the corresponding normalized polarized Hessian space $\mathscr{N}^{\mathrm{pol}}_f$ is simultaneously strictly triangularizable.
\end{proposition}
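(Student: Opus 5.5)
Both assertions rest on the weight grading. Persistence follows at once from Lemma~\ref{lem:Hessian}: $\Hess(f)=\lambda\,x_0^{d(n-2)}$, and $\lambda\neq0$ because $\Hess(f)\not\equiv0$, so Theorem~\ref{thm:hessian}\,(i) applies. For the triangularizability claim I will show that, with the reference tuple built from $e_0$, the normalized polarized Hessian is triangular in the standard basis.

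\emph{Selection rule.} Isobaricity of weight $d-1$ means the coefficient array $f_{i_1\cdots i_n}$ of $T_f$ vanishes unless $i_1+\cdots+i_n=d-1$. By \eqref{polarized-Hessian-Tf}, for $\mathbf{v}=(v^{(1)},\ldots,v^{(r)})$ the $(a,c)$ entry of $\mathcal{H}^{\mathrm{pol}}_f(\mathbf{v})$ is
\[
n!\sum_{k_1,\ldots,k_r} v^{(1)}_{k_1}\cdots v^{(r)}_{k_r}\,f_{k_1\cdots k_r a c},
\]
where only tuples with $k_1+\cdots+k_r=d-1-a-c$ contribute. Two consequences follow.
\begin{itemize}
\item For $\mathbf{e}=(e_0,\ldots,e_0)$, only $k=0$ contributes, so $\mathcal{H}^{\mathrm{pol}}_f(\mathbf{e})$ is supported on the antidiagonal $a+c=d-1$.
\item Its determinant is, up to sign, the product of these antidiagonal entries. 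It is a nonzero multiple of $\Hess(f)(e_0)$, since $f_{e_0,\ldots,e_0}=\frac{n!}{2}T_f(e_0,\ldots,e_0,x,x)$ is a constant multiple of the Hessian quadric of $f$ at $e_0$. Hence it equals a nonzero constant times $\lambda$, so every antidiagonal entry is nonzero.
\end{itemize}

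\emph{Triangular form.} Put $A(\mathbf{v})=\mathcal{H}^{\mathrm{pol}}_f(\mathbf{e})^{-1}\mathcal{H}^{\mathrm{pol}}_f(\mathbf{v})$. The inverse of an antidiagonal matrix is antidiagonal, so $A(\mathbf{v})_{a,c}$ is a nonzero multiple of $\mathcal{H}^{\mathrm{pol}}_f(\mathbf{v})_{d-1-a,\,c}$. By the selection rule this entry is nonzero only if $\sum k_j=a-c\ge 0$.
\begin{itemize}
\item Hence $A(\mathbf{v})$ is lower triangular.
\item On the diagonal only $k=0$ contributes, so $A(\mathbf{v})_{a,a}=\pi(\mathbf{v})$ for every $a$, where $\pi(\mathbf{v})\coloneqq v^{(1)}_0\cdots v^{(r)}_0$.
\item Therefore $\det\mathcal{H}^{\mathrm{pol}}_f(\mathbf{v})=c_0\,\pi(\mathbf{v})^d$, with $c_0\coloneqq\det\mathcal{H}^{\mathrm{pol}}_f(\mathbf{e})\neq0$.
\end{itemize}

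\emph{Normalization.} Choose a $d$-th root $\gamma$ of $c_0$ and set $P_f(\mathbf{v})=\gamma\,\pi(\mathbf{v})$. This is multilinear, nonzero, and satisfies \eqref{polarized-Hessian-perfect-power}. Any other multilinear choice differs from it by a $d$-th root of unity, because a ratio of multilinear forms with the same $d$-th power is a constant on a dense open set. Take $\mathbf{v}_0=(\gamma^{-1}e_0,e_0,\ldots,e_0)$; then $P_f(\mathbf{v}_0)=1$ and $\mathcal{H}^{\mathrm{pol}}_f(\mathbf{v}_0)=\gamma^{-1}\mathcal{H}^{\mathrm{pol}}_f(\mathbf{e})$. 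Consequently
\[
\mathcal{N}^{\mathrm{pol}}_f(\mathbf{v})=\gamma A(\mathbf{v})-\gamma\pi(\mathbf{v})I_V
\]
is strictly lower triangular for every $\mathbf{v}$. Reversing the order of the basis $e_{d-1},\ldots,e_0$ makes the whole space $\mathscr{N}^{\mathrm{pol}}_f$ strictly upper triangular simultaneously.

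The main point needing care is this normalization. One must confirm that the $P_f$ coming from Theorem~\ref{thm:hessian}\,(iii) can be taken equal to $\gamma\pi$; this holds because $P_f$ is only determined up to a root of unity, and changing that root just rescales $\mathbf{v}_0$. One must also confirm that the antidiagonal entries are nonzero, which is exactly the hypothesis $\lambda\neq0$.
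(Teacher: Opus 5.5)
Your proof is correct and follows essentially the same route as the paper. Both arguments use the antidiagonal reference matrix $\mathcal{H}^{\mathrm{pol}}_f(\mathbf{e}_{\boldsymbol 0})$, the weight-support argument showing that $\mathcal{H}^{\mathrm{pol}}_f(\mathbf{e}_{\boldsymbol 0})^{-1}\mathcal{H}^{\mathrm{pol}}_f(\cdot)$ is lower triangular, the normalization $\mathbf{v}_0=(ce_0,e_0,\ldots,e_0)$, and reversal of the basis. The only difference is that you compute the diagonal explicitly, obtaining $P_f=\gamma\,v^{(1)}_0\cdots v^{(r)}_0$ in closed form up to a root of unity; the paper instead takes $P_f$ from Theorem~\ref{thm:hessian} and deduces $P_f(\mathbf{e}_{\boldsymbol k})=0$ for $|\boldsymbol k|>0$ from the singularity of the strictly lower triangular products.
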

\begin{proof}
Put $r=n-2$, let $e_0,\ldots,e_{d-1}$ be the standard basis of $\mathbbm{C}^d$, and, for $\boldsymbol{k}=(k_1,\ldots,k_r)\in\{0,\ldots,d-1\}^r$, write $\mathbf{e}_{\boldsymbol{k}}\coloneqq(e_{k_1},\ldots,e_{k_r})$, $|\boldsymbol{k}|\coloneqq k_1+\cdots+k_r$, and $\boldsymbol{0}=(0,\ldots,0)$. Since $f$ is isobaric of weight $d-1$, its associated symmetric
$n$-linear form $T_f$ satisfies
\begin{equation*}
T_f(e_{i_1},\ldots,e_{i_n})\neq0
\quad\Longrightarrow\quad
i_1+\cdots+i_n=d-1.
\end{equation*}
By \eqref{polarized-Hessian-Tf}, it follows that
\begin{equation}\label{isobaric-pol-Hessian-support}
\big(\mathcal{H}^{\mathrm{pol}}_f
(\mathbf{e}_{\boldsymbol{k}})\big)_{ij}\neq0
\quad \Longrightarrow \quad i+j+|\boldsymbol{k}|=d-1.
\end{equation}
In particular, $\mathcal{H}^{\mathrm{pol}}_f(\mathbf{e}_{\boldsymbol{0}})$ is supported on the anti-diagonal $i+j=d-1$. By Lemma~\ref{lem:Hessian}, there exists $\lambda\in\mathbbm{C}^{\times}$ such that $\Hess(f)=\lambda x_0^{d(n-2)}$. Moreover, by \eqref{polarized-Hessian-Tf} and \eqref{hessian-polarization}, $\mathcal{H}^{\mathrm{pol}}_f(x,\ldots,x)=(n-2)!\,\mathcal{H}_f(x)$. Hence
\begin{equation*}
\Hess(f_{\mathbf{e}_{\boldsymbol{0}}})=((n-2)!)^d\Hess(f)(e_0)=((n-2)!)^d\lambda\neq0.
\end{equation*}
Thus $\mathcal{H}^{\mathrm{pol}}_f(\mathbf{e}_{\boldsymbol{0}})$ is invertible. Since it is an invertible anti-diagonal matrix, its inverse is anti-diagonal as well. We now determine the support of $\mathcal{H}^{\mathrm{pol}}_f(\mathbf{e}_{\boldsymbol{0}})^{-1}\mathcal{H}^{\mathrm{pol}}_f(\mathbf{e}_{\boldsymbol{k}})$. If its $(i,j)$-entry is nonzero, then there exists an index $s$
such that
\begin{equation*}
\big(
\mathcal{H}^{\mathrm{pol}}_f
(\mathbf{e}_{\boldsymbol{0}})^{-1}
\big)_{is}\neq0
\qquad\text{and}\qquad
\big(
\mathcal{H}^{\mathrm{pol}}_f
(\mathbf{e}_{\boldsymbol{k}})
\big)_{sj}\neq0.
\end{equation*}
The first condition implies $i+s=d-1$, whereas \eqref{isobaric-pol-Hessian-support} gives $s+j+|\boldsymbol{k}|=d-1$. Therefore
\begin{equation}\label{isobaric-normalized-support}
j=i-|\boldsymbol{k}|.
\end{equation}
Consequently, whenever $|\boldsymbol{k}|>0$, the matrix
\begin{equation}\label{isobaric-strict-lower}
\mathcal{H}^{\mathrm{pol}}_f
(\mathbf{e}_{\boldsymbol{0}})^{-1}
\mathcal{H}^{\mathrm{pol}}_f
(\mathbf{e}_{\boldsymbol{k}})
\end{equation}
is strictly lower triangular. By Lemma~\ref{lem:Hessian} and Theorem~\ref{thm:hessian}, $f$ is persistent. Let $P_f$ be the multilinear form in \eqref{polarized-Hessian-perfect-power}, so that $\Hess(f_{\mathbf{v}})=P_f(\mathbf{v})^d$. Since $\Hess(f_{\mathbf{e}_{\boldsymbol{0}}})\neq0$, we have $P_f(\mathbf{e}_{\boldsymbol{0}})\neq0$. For every $\boldsymbol{k}$ with $|\boldsymbol{k}|>0$, \eqref{isobaric-strict-lower} is singular, while $\mathcal{H}^{\mathrm{pol}}_f(\mathbf{e}_{\boldsymbol{0}})$ is invertible. Hence $\Hess(f_{\mathbf{e}_{\boldsymbol{k}}})=0$, and therefore
\begin{equation}\label{isobaric-Pf-vanishing}
P_f(\mathbf{e}_{\boldsymbol{k}})=0,
\qquad |\boldsymbol{k}|>0.
\end{equation}
Choose $c\in\mathbbm{C}^{\times}$ such that $c\,P_f(\mathbf{e}_{\boldsymbol{0}})=1$ and take $\mathbf{v}_0=(c e_0,e_0,\ldots,e_0)$ in the definition \eqref{normalized-polarized-Hessian}. By multilinearity, $\mathcal{H}^{\mathrm{pol}}_f(\mathbf{v}_0)=c\,\mathcal{H}^{\mathrm{pol}}_f(\mathbf{e}_{\boldsymbol{0}})$, and hence $\mathcal{H}^{\mathrm{pol}}_f(\mathbf{v}_0)^{-1}=c^{-1}\mathcal{H}^{\mathrm{pol}}_f(\mathbf{e}_{\boldsymbol{0}})^{-1}$. For $|\boldsymbol{k}|>0$, using
\eqref{isobaric-Pf-vanishing}, we obtain
\begin{equation*}
\mathcal{N}^{\mathrm{pol}}_f(\mathbf{e}_{\boldsymbol{k}})=\mathcal{H}^{\mathrm{pol}}_f(\mathbf{v}_0)^{-1}
\mathcal{H}^{\mathrm{pol}}_f(\mathbf{e}_{\boldsymbol{k}})-P_f(\mathbf{e}_{\boldsymbol{k}})I_V=c^{-1}
\mathcal{H}^{\mathrm{pol}}_f(\mathbf{e}_{\boldsymbol{0}})^{-1}\mathcal{H}^{\mathrm{pol}}_f(\mathbf{e}_{\boldsymbol{k}}),
\end{equation*}
which is strictly lower triangular by \eqref{isobaric-normalized-support}. For $\boldsymbol{k}=\boldsymbol{0}$, one has
\begin{equation*}
\mathcal{N}^{\mathrm{pol}}_f(\mathbf{e}_{\boldsymbol{0}})=c^{-1}I_V-P_f(\mathbf{e}_{\boldsymbol{0}})I_V=0,
\end{equation*}
because $c^{-1}=P_f(\mathbf{e}_{\boldsymbol{0}})$. Finally, the map $\mathcal{N}^{\mathrm{pol}}_f:V^{\times r}\to \operatorname{End}(V)$ is multilinear. Since the tuples $\mathbf{e}_{\boldsymbol{k}}$ span the corresponding tensor product, every value $\mathcal{N}^{\mathrm{pol}}_f(\mathbf{v})$ is a linear combination of the matrices $\mathcal{N}^{\mathrm{pol}}_f(\mathbf{e}_{\boldsymbol{k}})$. Thus every element of $\mathscr{N}^{\mathrm{pol}}_f$ is strictly lower triangular in the standard basis. Reversing the order of the basis makes all these matrices strictly upper triangular. Therefore $\mathscr{N}^{\mathrm{pol}}_f$ is simultaneously strictly triangularizable.
\end{proof}

\begin{remark}\label{rem:isobaric-triangularizable}
Proposition~\ref{prop:isobaric-triangularizable} shows that the distinguished isobaric class considered in Section~3 is contained in the triangularizable class of the present section. Thus simultaneous strict triangularizability of normalized polarized Hessian spaces provides a matrix-theoretic explanation for the Kronecker closure of this isobaric class. In particular, the closure result of Corollary~\ref{cor:persistent-isobaric-product} also follows from Theorem~\ref{thm:general-triangularizability}. The counterexample of Section~\ref{sec:counterexample} shows that this additional structure does not follow from persistence alone.
\end{remark}

\begin{remark}
The one-sided criterion of Corollary~\ref{cor:one-sided-triangularizability} uses a feature specific to the cubic case. When $n=3$, the polarized Hessian depends linearly on a single vector, so a weighted sum of normalized Hessian matrices can again be expressed by evaluating the same linear map at one vector. For $n>3$, the polarized Hessian is multilinear in $n-2$ arguments, and the corresponding weighted sum need not arise from evaluation at a decomposable $(n-2)$-tuple. Therefore the proof of the one-sided cubic criterion does not directly extend to arbitrary degree.
\end{remark}

%%%%%%%%%%%%%%%%%%%%%%%%%%%%%%%%%%%%%%%%%%%%%%%%%%%%%%%%%%%%
%%%%%%%%%%%%%%%%%%%%%%%%%%%%%%%%%%%%%%%%%%%%%%%%%%%%%%%%%%%%
\section{Failure of Kronecker closure in general}\label{sec:counterexample}

The closure results of the previous sections rely on additional structural properties of the factors. We now show that such hypotheses cannot be removed in general. More precisely, we construct a persistent cubic form whose Kronecker square is not persistent. The obstruction occurs at a nondecomposable point of the tensor-product space, and therefore is not detected by the Hessian identity on the Segre variety established in Lemma~\ref{lem:Kronecker-Hessian-decomposable}.

\begin{proposition}
There exists a persistent cubic form $f\in \Sym^3\mathbbm{C}^{12}$ such that $f\boxtimes f\in\Sym^3(\mathbbm{C}^{12}\otimes\mathbbm{C}^{12})\simeq \Sym^3\mathbbm{C}^{144}$ is not persistent. In particular, symmetric persistence is not preserved under Kronecker products in general.
\end{proposition}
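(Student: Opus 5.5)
The plan follows the outline in the introduction. First build a persistent cubic $f$ on $\mathbbm{C}^{12}$ whose normalized Hessian space is nilpotent but not simultaneously strictly triangularizable. Then show that at a suitable nondecomposable point the Hessian of $f\boxtimes f$ is not a perfect power of a linear form. Theorem~\ref{thm:hessian}(ii) then rules out persistence.

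\emph{Construction of $f$.} Start from a Meisters--Olech type quadratic map $Q:\mathbbm{C}^5\to\mathbbm{C}^5$ whose Jacobian $\mathcal{J}_Q(x)$ is nilpotent at every $x$ but whose Jacobian matrices span a nilpotent space that is not triangularizable. Introduce dual variables $y\in\mathbbm{C}^5$ and two extra variables $s,t$, giving $12$ coordinates, and take a cubic of the shape
\begin{equation*}
f=s^2t+s\,\langle x,y\rangle+\langle y,Q(x)\rangle,
\end{equation*}
possibly with small modifications. Its Hessian in the ordered coordinates $(s,x,y,t)$ should be a block anti-triangular matrix whose middle block is, up to the constant pairing, $\begin{pmatrix}\ast & sI+\mathcal{J}_Q(x)^{\top}\\ sI+\mathcal{J}_Q(x) & 0\end{pmatrix}$. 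The determinant is then $\pm(\text{const})\,s^2\det(sI+\mathcal{J}_Q(x))^2$. Nilpotency of $\mathcal{J}_Q$ gives $\det(sI+\mathcal{J}_Q)=s^5$, so $\Hess(f)=c\,s^{12}$ with $c\neq0$ (after rescaling, $(2s)^{12}$). Hence $f$ is persistent by Theorem~\ref{thm:hessian}(ii). The routine steps are choosing the weights so the block structure is exactly anti-triangular and checking the constant.

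\emph{Failure for $f\boxtimes f$.} By Theorem~\ref{thm:hessian}(ii), if $f\boxtimes f$ were persistent then $\Hess(f\boxtimes f)=\lambda\ell^{144}$. Lemma~\ref{lem:Kronecker-Hessian-decomposable} shows this must agree with $c'\,s(v)^{12}s(w)^{12}$ on decomposable points. This forces $\ell=\ell_f\boxtimes\ell_f$ up to scalar, with $\ell_f=s$. Now take $z=u_0\otimes u_0+u_1\otimes u_1$ and use Corollary~\ref{cor:Hessian-matrix-identity}, as in the proof of Proposition~\ref{prop:cubic-triangularizability}, to write
\begin{equation*}
\mathcal{H}_{f\boxtimes f}(z)=\tfrac16\big(\mathcal{H}_f(v_0)\boxtimes\mathcal{H}_f(v_0)\big)\mathcal{M}(z).
\end{equation*}
For points $z'=z_0+\epsilon z$ with $z_0=v_0\otimes v_0$, the perfect-power condition forces $\mathcal{M}(z')-\ell(z')I$ to be nilpotent whenever $\ell(z)=0$.

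Choose $u_0,u_1\in\ker\ell_f$. Then $\mathcal{M}(z)=N_0\boxtimes N_0+N_1\boxtimes N_1$ with $N_i=\mathcal{N}_f(u_i)$. It then suffices to exhibit $u_0,u_1$ such that $N_0\boxtimes N_0+N_1\boxtimes N_1$ is not nilpotent, for instance with $\operatorname{tr}\big((N_0\boxtimes N_0+N_1\boxtimes N_1)^k\big)\neq0$ for some $k$. Expanding, this trace is a sum of terms $\operatorname{tr}(N_{i_1}\cdots N_{i_k})^2$. So one needs a word in $N_0,N_1$ with nonzero trace. That is exactly the failure of simultaneous triangularizability, since the trace of a product vanishes on triangularizable nilpotent families.

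\emph{Main obstacle.} The main obstacle is the explicit choice of $Q$, and hence of $u_0,u_1$, such that $\operatorname{tr}(N_0N_1)$ or a short word has nonzero trace. I would try the standard Meisters--Olech example $Q=(x_1x_4+\dots)$ and compute $\operatorname{tr}(N_0N_1)$ directly. If that vanishes, I would compute $\operatorname{tr}(N_0^2N_1^2)$ and $\operatorname{tr}(N_0N_1N_0N_1)$. The nonvanishing squared trace then gives a nonzero coefficient of $\det(\mu I-\mathcal{M}(z))$ beyond $\mu^{144}$, contradicting $\Hess(f\boxtimes f)(z_0+\epsilon z)=\lambda\ell(z_0)^{144}$ for all $\epsilon$.
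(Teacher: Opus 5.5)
Your plan follows the paper's proof. It uses the same cubic $f=s^2t+2s\sum_i y_ix_i+\sum_i y_iq_i(x)$; dropping the factor $2$, as you do, only changes the constant to $\Hess(f)=4s^{12}$. It also uses the same block-determinant evaluation via nilpotency of $\mathcal{J}_Q$, the same nondecomposable point $z=u_0\otimes u_0+u_1\otimes u_1$ with $u_i$ in the $x$-block, and the same reduction, through $\det(I+cM)=1$, to nilpotency of $M=\mathcal{N}_0\boxtimes\mathcal{N}_0+\mathcal{N}_1\boxtimes\mathcal{N}_1$.

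As written, though, the proposal does not yet prove the existence statement. It rests on a $Q$ you have not fixed and a trace you have not computed, and that is exactly what the paper supplies. The paper gives an explicit quadratic $Q$ on $\mathbbm{C}^5$ with $\det(\mu I_5-\mathcal{J}_Q(x))=\mu^5$. For $E_0=\mathcal{J}_Q(\varepsilon_0)$ and $E_1=\mathcal{J}_Q(\varepsilon_1)$, your first test fails, since $\operatorname{tr}(E_0E_1)=0$. Your fallback succeeds: $\operatorname{tr}(E_0^2E_1^2)=-1$ and $\operatorname{tr}\big((E_0E_1)^2\big)=2$. So no idea is missing; you only need to commit to this $Q$, or to another explicitly verified one.

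The remaining differences from the paper are minor.

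\textbf{Showing $\ell(z)=0$.} You identify $\ell$ with a multiple of $s\boxtimes s$ by unique factorization of $\ell(v\otimes w)^{144}=c\,(s(v)s(w))^{144}$. Note that the exponent on decomposable points is $144$, not $12$. The paper needs less: only that $\mathcal{H}_{f\boxtimes f}(u_i\otimes u_i)$ is singular while $\mathcal{H}_{f\boxtimes f}(b\otimes b)$ is invertible.

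\textbf{Showing $M$ is not nilpotent.} The paper computes the characteristic polynomial of $M|_{X\otimes X}$. You instead use $\operatorname{tr}(M^k)=\sum_{w}\operatorname{tr}(\mathcal{N}_w)^2$, where $\mathcal{N}_w=\mathcal{N}_{i_1}\cdots\mathcal{N}_{i_k}$ for $w=(i_1,\ldots,i_k)\in\{0,1\}^k$. This links non-nilpotency directly to a word of nonzero trace, i.e.\ to the non-triangularizability the paper only records in Remark~\ref{rem:counterexample-nontriangularizable}. It is a cleaner mechanism, but it needs two precautions:
\begin{itemize}
\item Over $\mathbbm{C}$ a sum of squares can cancel, so you need the traces to be real. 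This holds here, since all the matrices are rational.
\item On the full $12$-dimensional space, the block structure of $\mathcal{N}(\xi)$ gives $\operatorname{tr}(\mathcal{N}_w)=2^{-k}\big(\operatorname{tr}E_w+\operatorname{tr}E_{\bar w}\big)$. Here $\bar w$ is the reversed word, coming from the transposed block on $\mathbbm{C}^5_y$. So either restrict to the invariant subspace $X\otimes X$ as the paper does, or check there is no cancellation. For your words $\bar w$ is a cyclic rotation of $w$, giving for example $\operatorname{tr}(\mathcal{N}_0^2\mathcal{N}_1^2)=-\tfrac18$.
\end{itemize}

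\textbf{Avoiding an explicit $Q$.} You could instead argue abstractly from the existence of non-strongly-nilpotent examples plus Levitzki's theorem. In that case a word of nonzero trace may require more than two of the matrices $\mathcal{J}_Q(\varepsilon_j)$. Then take $z=\sum_j u_j\otimes u_j$ with all $u_j$ in the $x$-block; nothing else in the argument changes.
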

\begin{proof}
Let $x=(x_0,\ldots,x_4)$ and define the quadratic map
\begin{equation*}
Q=(q_0,\ldots,q_4):\mathbbm{C}^5\longrightarrow\mathbbm{C}^5
\end{equation*}
by $q_0(x)=\frac12x_2^2$, $q_1(x)=x_0x_2+\frac12x_2^2-x_2x_4$, $q_2(x)=x_0x_3+\frac12x_1^2-x_1x_4-x_3x_4+\frac12x_4^2$, $q_3(x)=x_0x_4-x_1x_2+x_2x_4-x_4^2$, and $q_4(x)=\frac12x_2^2$. Its Jacobian matrix is
\begin{equation}\label{Jacobian-Q}
\mathcal{J}_Q(x)=
\begin{pmatrix}
0&0&x_2&0&0\\
x_2&0&x_0+x_2-x_4&0&-x_2\\
x_3&x_1-x_4&0&x_0-x_4&-x_1-x_3+x_4\\
x_4&-x_2&-x_1+x_4&0&x_0+x_2-2x_4\\
0&0&x_2&0&0
\end{pmatrix}.
\end{equation}
A direct determinant computation gives $\det(\mu I_5-\mathcal{J}_Q(x))=\mu^5$. Hence, by the Cayley--Hamilton theorem,
\begin{equation}\label{J-nilpotent}
\mathcal{J}_Q(x)^5=0,\qquad\forall\,\,x\in\mathbbm{C}^5.
\end{equation}
Thus every matrix $\mathcal{J}_Q(x)$ is nilpotent. We now introduce coordinates $(s,x,y,t)$ on $\mathbbm{C}^{12}$, where $x=(x_0,\ldots,x_4)$, $y=(y_0,\ldots,y_4)$, and define
\begin{equation}\label{f-counterexample}
f(s,x,y,t)\coloneqq s^2t+2s\sum_{i=0}^4y_ix_i+\sum_{i=0}^4y_iq_i(x)\in\Sym^3\mathbbm{C}^{12}.
\end{equation}
For convenience, put
\begin{equation*}
S(y)\coloneqq\sum_{i=0}^4y_i\mathcal{H}_{q_i},
\qquad\qquad
A(s,x)\coloneqq 2sI_5+\mathcal{J}_Q(x),
\end{equation*}
where $\mathcal{H}_{q_i}$ denotes the constant Hessian matrix of the quadratic form $q_i$. With respect to the block decomposition $\mathbbm{C}^{12}=\mathbbm{C}_s\oplus\mathbbm{C}^5_x\oplus\mathbbm{C}^5_y\oplus\mathbbm{C}_t$, the Hessian matrix of $f$ is
\begin{equation}\label{Hf-block}
\mathcal{H}_f(s,x,y,t)=
\begin{pmatrix}
2t & 2y^{\mathsf T} & 2x^{\mathsf T} & 2s \\
2y & S(y) & A(s,x)^{\mathsf T} & 0 \\
2x & A(s,x) & 0 & 0 \\
2s & 0 & 0 & 0
\end{pmatrix}.
\end{equation}
Expanding the determinant first along the last row and then along the last column gives
\begin{equation}\label{Hf-det1}
\Hess(f)=-4s^2\det\begin{pmatrix}
S(y)&A(s,x)^{\mathsf T} \\
A(s,x)&0
\end{pmatrix}.
\end{equation}
For arbitrary $5\times5$ matrices $S$ and $A$ one has
\begin{equation}\label{block-det}
\det\begin{pmatrix}
S&A^{\mathsf T} \\
A&0
\end{pmatrix}=(-1)^5\det(A^{\mathsf T})\det(A)=-\det(A)^2.
\end{equation}
Indeed, exchanging the two block columns transforms the matrix into the
block upper-triangular matrix
\begin{equation*}
\begin{pmatrix}
A^{\mathsf T}&S \\
0&A
\end{pmatrix},
\end{equation*}
and exchanging two blocks of five columns contributes the sign $(-1)^{5\cdot5}=-1$. It follows from \eqref{Hf-det1}--\eqref{block-det} that $\Hess(f)=4s^2\det(2sI_5+\mathcal{J}_Q(x))^2$. Since $\mathcal{J}_Q(x)$ is nilpotent by \eqref{J-nilpotent}, $\det(2sI_5+\mathcal{J}_Q(x))=(2s)^5$. Therefore
\begin{equation}\label{Hf-perfect-power}
\Hess(f)=4s^2(2s)^{10}=(2s)^{12}.
\end{equation}
By Theorem~\ref{thm:hessian}, the identity \eqref{Hf-perfect-power} implies that $f$ is persistent. We now prove that $f\boxtimes f$ is not persistent. Let $b=(1,0,\ldots,0)\in\mathbbm{C}^{12}$, where the nonzero coordinate is the $s$-coordinate. From \eqref{Hf-block},
\begin{equation}\label{G_f}
\mathcal{H}_f(b)=\begin{pmatrix}
0&0&0&2\\
0&0&2I_5&0\\
0&2I_5&0&0\\
2&0&0&0
\end{pmatrix},
\end{equation}
and in particular
\begin{equation}\label{det-G_f}
\Hess(f)(b)=2^{12}\neq0.
\end{equation}
Let $\varepsilon_0,\ldots,\varepsilon_4$ denote the standard basis of $\mathbbm{C}^5_x$, and define $u_0=(0,\varepsilon_0,0,0)$ and $u_1=(0,\varepsilon_1,0,0)$. More generally, for $\xi\in\mathbbm{C}^5_x$, put $u_\xi=(0,\xi,0,0)\in\mathbbm{C}_s\oplus\mathbbm{C}^5_x\oplus\mathbbm{C}^5_y\oplus\mathbbm{C}_t$. Using \eqref{Hf-block} and \eqref{G_f}, one obtains
\begin{equation*}
\mathcal{N}(\xi)\coloneqq
\mathcal{H}_f(b)^{-1}\mathcal{H}_f(u_\xi)=
\begin{pmatrix}
0&0&0&0\\
\xi&\frac12\mathcal{J}_Q(\xi)&0&0\\
0&0&\frac12\mathcal{J}_Q(\xi)^{\mathsf T}&0\\
0&0&\xi^{\mathsf T}&0
\end{pmatrix}.
\end{equation*}
In particular, the $5$-dimensional subspace $X\coloneqq \{u_\xi\mid\xi\in\mathbbm{C}^5_x\}\subset\mathbbm{C}^{12}$ is invariant under every $\mathcal{N}(\xi)$, and
\begin{equation*}
\mathcal{N}(\xi)|_X=\frac{1}{2}\mathcal{J}_Q(\xi).
\end{equation*}
Set $\mathcal{N}_0\coloneqq\mathcal{N}(\varepsilon_0)$ and $\mathcal{N}_1\coloneqq\mathcal{N}(\varepsilon_1)$. From \eqref{Jacobian-Q},
\begin{equation}\label{E0E1}
E_0\coloneqq \mathcal{J}_Q(\varepsilon_0)=
\begin{pmatrix}
0&0&0&0&0\\
0&0&1&0&0\\
0&0&0&1&0\\
0&0&0&0&1\\
0&0&0&0&0
\end{pmatrix},
\qquad
E_1\coloneqq \mathcal{J}_Q(\varepsilon_1)=
\begin{pmatrix}
0&0&0&0&0\\
0&0&0&0&0\\
0&1&0&0&-1\\
0&0&-1&0&0\\
0&0&0&0&0
\end{pmatrix}.
\end{equation}
Hence
\begin{equation}\label{N0N1-restriction}
\mathcal{N}_0|_X=\frac12E_0,
\qquad\qquad
\mathcal{N}_1|_X=\frac12E_1.
\end{equation}
Since $\dim(\mathbbm{C}^{12}\otimes\mathbbm{C}^{12})=144$, if $f\boxtimes f$ were persistent, then by Theorem~\ref{thm:hessian} there would exist a nonzero linear form $\ell\in(\mathbbm{C}^{12}\otimes\mathbbm{C}^{12})^\vee$ and a scalar $\lambda\in\mathbbm{C}^{\times}$ such that $\Hess(f\boxtimes f)(U)=\lambda\,\ell(U)^{144}$ for every $U\in\mathbbm{C}^{12}\otimes\mathbbm{C}^{12}$. After rescaling $\ell$, we may assume $\lambda=1$, and hence
\begin{equation}\label{fKf-assume-persistent}
\Hess(f\boxtimes f)(U)=\ell(U)^{144},
\qquad
\forall\,\,U\in\mathbbm{C}^{12}\otimes\mathbbm{C}^{12}.
\end{equation}
Set $B\coloneqq b\otimes b$. For cubic forms, \eqref{Hessian-Kronecker} gives
\begin{equation}\label{H_fKf_B}
\mathcal{H}_{f\boxtimes f}(B)=\frac{1}{6}\,\mathcal{H}_f(b)\boxtimes\mathcal{H}_f(b),
\end{equation}
which is invertible by \eqref{det-G_f}. Hence, under the assumption \eqref{fKf-assume-persistent}, $\ell(B)\neq0$. On the other hand, since the $s$-coordinate of $u_0$ and $u_1$ is zero, \eqref{Hf-perfect-power} gives $\Hess(f)(u_i)=0$ for $i=0,1$. Therefore $\mathcal{H}_{f\boxtimes f}(u_i\otimes u_i)=\frac{1}{6}\,\mathcal{H}_f(u_i)\boxtimes \mathcal{H}_f(u_i)$ is singular. It follows from \eqref{fKf-assume-persistent} that
\begin{equation}\label{ell-u_i}
\ell(u_i\otimes u_i)=0,
\qquad i=0,1.
\end{equation}
Define the nondecomposable tensor $z\coloneqq u_0\otimes u_0+u_1\otimes u_1$. Since $\ell$ is linear, \eqref{ell-u_i} implies $\ell(z)=0$. Consequently, $\ell(B+cz)=\ell(B)$ for every $c\in\mathbbm{C}$, and hence \eqref{fKf-assume-persistent} gives
\begin{equation}\label{det-constant}
\Hess(f\boxtimes f)(B+cz)=\Hess(f\boxtimes f)(B).
\end{equation}
Because $f\boxtimes f$ is cubic, its Hessian matrix depends linearly on its argument. Therefore $\mathcal{H}_{f\boxtimes f}(B+cz)=\mathcal{H}_{f\boxtimes f}(B)+c\,\mathcal{H}_{f\boxtimes f}(z)$. Define
\begin{equation*}
M\coloneqq \mathcal{H}_{f\boxtimes f}(B)^{-1}\mathcal{H}_{f\boxtimes f}(z).
\end{equation*}
Dividing \eqref{det-constant} by $\Hess(f\boxtimes f)(B)\neq0$, we obtain $\det(I_{144}+cM)=1$ for every $c\in\mathbbm{C}$. It follows that all eigenvalues of $M$ are zero; equivalently, $M$ is nilpotent. We now compute $M$ explicitly enough to obtain a contradiction. By the linearity of the Hessian matrix of the cubic $f\boxtimes f$ and \eqref{H_fKf_B},
\begin{equation*}
\mathcal{H}_{f\boxtimes f}(z)=\mathcal{H}_{f\boxtimes f}(u_0\otimes u_0)+\mathcal{H}_{f\boxtimes f}(u_1\otimes u_1)=\frac{1}{6}
\big(\mathcal{H}_f(u_0)\boxtimes\mathcal{H}_f(u_0)
+\mathcal{H}_f(u_1)\boxtimes\mathcal{H}_f(u_1)\big).
\end{equation*}
Moreover, from \eqref{H_fKf_B}, $\mathcal{H}_{f\boxtimes f}(B)^{-1}=6(\mathcal{H}_f(b)^{-1}\boxtimes\mathcal{H}_f(b)^{-1})$. Hence $M=\mathcal{N}_0\boxtimes\mathcal{N}_0+\mathcal{N}_1\boxtimes\mathcal{N}_1$. Since $X$ is invariant under both $\mathcal{N}_0$ and $\mathcal{N}_1$, the space $X\otimes X$ is invariant under $M$. By
\eqref{N0N1-restriction},
\begin{equation}\label{eq:Mrestriction}
M|_{X\otimes X}=\frac{1}{4}
\left(E_0\boxtimes E_0+E_1\boxtimes E_1\right).
\end{equation}
We show that $M$ is not nilpotent. A direct computation using the explicit matrices in \eqref{E0E1} and \eqref{eq:Mrestriction} gives
\begin{equation*}
\det(\mu I_{25}-M|_{X\otimes X})=\mu^{19}\Big(\mu^2-\frac18\Big)\Big(\mu^2+\frac1{16}\Big)^2.
\end{equation*}
In particular, $M|_{X\otimes X}$ has nonzero eigenvalues and hence is not nilpotent. Since $X\otimes X$ is an invariant subspace of $M$, it follows that $M$ itself is not nilpotent. This contradicts the previous conclusion. Therefore $f\boxtimes f$ is not persistent. Since $f$ itself is persistent, this gives a counterexample to closure of symmetric persistent tensors under Kronecker products.
\end{proof}

\begin{remark}\label{rem:counterexample-nontriangularizable}
The same construction also shows that the normalized Hessian space of a
persistent cubic need not be simultaneously strictly triangularizable.
Indeed, for the persistent cubic
$f\in\Sym^3\mathbbm{C}^{12}$ constructed above, the normalized Hessian
operators $\mathcal{N}_0$ and $\mathcal{N}_1$ preserve the subspace $X$ and satisfy \eqref{N0N1-restriction}. Since $\operatorname{tr}(E_0^2E_1^2)=-1\neq0$, the matrices $E_0$ and $E_1$ cannot be simultaneously strictly triangularizable. Consequently, neither can the normalized Hessian space $\mathcal{N}_f(\ker\ell)$. Thus the triangularizability hypothesis in Section~\ref{sec:matrix-space} is a genuine additional condition and does not follow from persistence alone.
\end{remark}

It is worth noting that the dimension $12$ arising in the above construction is natural. Meisters and Olech proved that for homogeneous quadratic maps $Q:\mathbbm{C}^m\to\mathbbm{C}^m$ with nilpotent Jacobian, strong nilpotence necessarily holds when $m<5$, whereas counterexamples already occur in dimension $m=5$ \cite{MO91}. Since the above construction associates to such a quadratic map in dimension $m$ a cubic form in dimension $2m+2$, the smallest dimension accessible by this construction is therefore $12$.

\begin{remark}
The obstruction occurs at the nondecomposable tensor $z=u_0\otimes u_0+u_1\otimes u_1$. Thus it is invisible on the Segre variety. On decomposable tensors
$v\otimes w$, the identity
\begin{equation*}
\mathcal{H}_{f\boxtimes f}(v\otimes w)=
\frac{1}{6}\,\mathcal{H}_f(v)\boxtimes\mathcal{H}_f(w)
\end{equation*}
and the corresponding Kronecker determinant formula remain valid. The
failure occurs precisely when one attempts to extend the perfect-power
determinant identity from decomposable tensors to arbitrary elements of
$\mathbbm{C}^{12}\otimes\mathbbm{C}^{12}$.
\end{remark}

%%%%%%%%%%%%%%%%%%%%%%%%%%%%%%%%%%%%%%%%%%%%%%%%%%%%%%%%%%%%

\section{Acknowledgments}
The author warmly thanks Giorgio Ottaviani for carefully reading the manuscript, for his invaluable feedback and insightful comments, for pointing out the global differentiation identities underlying Proposition~\ref{prop:Kronecker-differentiation}, and for encouraging a deeper investigation of the cubic case. This work was supported by the European Union through the ERC Advanced Grant \emph{TAtypic}, Project No.~101142236. Views and opinions expressed are, however, those of the author only and do not necessarily reflect those of the European Union or the European Research Council Executive Agency. Neither the European Union nor the granting authority can be held responsible for them.

Generative AI (ChatGPT, OpenAI) was used solely as an auxiliary tool in the exploration and computational testing of the counterexample presented in Section~\ref{sec:counterexample}, under the mathematical direction of the author. The mathematical construction, proof, and interpretation, as well as all other results of the paper, were developed and verified by the author.

%%%%%%%%%%%%%%%%%%%%%%%%%%%%%%%%%%%%%%%%%%%%%%%%%%%%%%%%%%%%
%\appendix
%%%%%%%%%%%%%%%%%%%%%%%%%%%%%%%%%%%%%%%%%%%%%%%%%%%%%%%%%%%%

%%%%%%%%%%%%%%%%%%%%%%%%%%%%%%%%%%%%%%%%%%%%%%%%%%%%%%%%%%%%

%%%%%%%%%%%%%%%%%%%%%%%%%%%%%%%%%%%%%%%%%%%%%%%%%%%%%%%%%%%%
\end{document}